\long\def\delete#1{}
\definecolor{Blue}{rgb}{0,0,1}
\definecolor{Red}{rgb}{1,0,0}
\definecolor{DarkGreen}{rgb}{0,0.6,0}
\definecolor{DarkYellow}{rgb}{1,1,0.2}
\definecolor{DarkPurple}{rgb}{.6,0,1}
\def\ma{\mathcal{A}}
\def\mb{\mathcal{B}}
\def\md{\mathcal{D}}
\def\mf{\mathcal{F}}
\def\mh{\mathcal{H}}
\def\mi{\mathcal{I}}
\def\mt{\mathcal{T}}
\def\mw{\mathcal{W}}
\def\bs{\setminus}
\def\le{\leqslant}
\def\b{\brack}
\numberwithin{equation}{section}
\newtheorem{thm}{Theorem}[section]
\newtheorem{lem}[thm]{Lemma}
\newtheorem{cor}[thm]{Corollary}
\newtheorem{pr1}[thm]{Proposition}
\newtheorem{cl}{Claim}
\newtheorem{con}{Construction}
\begin{document}
	
	\setcounter{page}{1}
	\renewcommand{\thefootnote}{}
	\newcommand{\remark}{\vspace{2ex}\noindent{\bf Remark.\quad}}
	\renewcommand{\abovewithdelims}[2]{%
		\genfrac{[}{]}{0pt}{}{#1}{#2}}

	%-------------------  First Head  -----------------------------------------
	
	\def\qed{\hfill$\Box$\vspace{11pt}}
	
	\title {\bf $s$-almost $t$-intersecting families for vector spaces}
	
	\author{Lijun Ji\thanks{E-mail: \texttt{jilijun@Suda.edu.cn}}\   \textsuperscript{a}}
	
	\author{Dehai Liu\thanks{E-mail: \texttt{liudehai@mail.bnu.edu.cn}}\   \textsuperscript{b}}
	\author{Kaishun Wang\thanks{ E-mail: \texttt{wangks@bnu.edu.cn}}\ \textsuperscript{b}}
	\author{Tian Yao\thanks{Corresponding author. E-mail: \texttt{tyao@hist.edu.cn}}\ \textsuperscript{c}}
	\author{Shuhui Yu\thanks{ E-mail: \texttt{yushuhui\_suda@163.com}}\ \textsuperscript{d}}
	
	\affil{ \textsuperscript{a} Department of Mathematics,
		Soochow University, Suzhou 215006,  China}
	
	\affil{ \textsuperscript{b} Laboratory of Mathematics and Complex Systems (Ministry of Education), School of
		Mathematical Sciences, Beijing Normal University, Beijing 100875, China}
	
	\affil{ \textsuperscript{c} School of Mathematical Sciences, Henan Institute of Science and Technology, Xinxiang 453003, China}
	
	\affil{ \textsuperscript{d} Department of Mathematics and Physics, Suzhou Vocational University, Suzhou	215000, China}
	
	\date{}
	
	\openup 0.5\jot
	\maketitle

	\begin{abstract}
		
		Let $V$ be a finite dimensional vector space over a finite field, and $\mathcal{F}$ a family consisting of $k$-subspaces of $V$. The family $\mathcal{F}$ is 
		called $t$-intersecting if $\dim(F_{1}\cap F_{2})\geq t$ for any $F_{1}, F_{2}\in \mathcal{F}$. We say $\mathcal{F}$ is $s$-almost $t$-intersecting if for each $F\in \mathcal{F}$ there are at most $s$ members $F^{\prime}$ of $\mathcal{F}$ such that $\dim(F\cap F^{\prime})<t$.
		In this paper, we prove that $s$-almost $t$-intersecting families with maximum size are $t$-intersecting. 
		We also consider $s$-almost $t$-intersecting families which are not $t$-intersecting, and characterize such families with maximum size for $(s,t)\neq(1,1)$. 
		The result for $1$-almost $1$-intersecting families provided by Shan and Zhou is generalized.
		
		\vspace{2mm}
		\noindent{\bf Key words}\ $t$-intersecting families;\ $s$-almost $t$-intersecting families;\ vector spaces
		
		\
		
		\noindent{\bf AMS classification:} \   05D05, 05A30
		
	\end{abstract}
	\section{Introduction}
	Let $\mf$ be a family consisting of $k$-subsets of an $n$-set. It is called  \textit{$t$-intersecting} if $\left| F_{1}\cap F_{2}\right|\geq t$ for any $F_{1}, F_{2}\in \mf$. 
	The famous Erd\H{o}s-Ko-Rado theorem \cite{MR0140419} gives the structure of maximum-sized $t$-intersecting families.  
	We refer readers to  \cite{ 2406295, MR0140419, 2407161, MR0519277, 2407162, MR0771733} for extensive  results on $t$-intersecting families.
	
	We say that $\mf $ is  \textit{$s$-almost $t$-intersecting} if for each $F\in \mf$ there are at most $s$ members  $F^{\prime}$ of $\mf$ with $\left| F\cap F^{\prime}\right|< t$. Clearly, a $t$-intersecting family is $s$-almost $t$-intersecting. 
	Gerbner et al. \cite{2406095}  proved that,  when  $n$ is sufficiently large, each member of a maximum-sized $s$-almost $1$-intersecting family contains a fixed element. 
	In \cite{2406096}, Frankl and Kupavskii characterized the structure of maximum $1$-almost $1$-intersecting families under the condition that they are not $1$-intersecting.   There are also some other results concerning families without $t$-intersection property \cite{2406092,2406094,2406093}.

	Intersection problems are  studied on some other mathematical objects, for example, vector spaces. Let $V$ be an $n$-dimensional vector space over the finite field $\mathbb{F}_{q}$. Write  the family of all
	$k$-subspaces of $V$ as ${V\b k}$. A family $\mf\subseteq {V\b k}$ is said to be \textit{$t$-intersecting} if $\dim(F_{1}\cap F_{2})\geq t$ for any $F_{1},F_{2}\in \mf$.
	Some classical results were proved for vector spaces \cite{ 2406292, 2406293, MR0867648,MR0382015,MR2231096, 2406294,2405263}. 
	
	For  a family $\mf\subseteq {V\b k}$ and  a subspace $S$ of $V$, define
	$$\md_{\mf}(S;t)=\left\{ F\in\mf: \dim(F\cap S)<t\right\}.$$ 
	A family $\mf\subseteq {V\b k }$ is  also called \textit{$s$-almost $t$-intersecting} if $\left|\md_{\mf}(F;t)\right|\leq s$ for any $F\in\mf$.  Shan and  Zhou \cite{2406097} characterized $1$-almost $1$-intersecting families with maximum size. Our  first result provides the structure of maximum-sized $s$-almost $t$-intersecting families   for general $s$ and $t$.

	\begin{thm}\label{6}
		Let $n$, $k$, $t$ and $s$ be positive integers with $k\geq t+1$ and  $n\geq 2k+s+\delta_{2,q}(1-\delta_{k,t+1})$. If $\mathcal{F}\subseteq {V\brack k}$ is a maximum-sized $s$-almost $t$-intersecting family, then there exists $E\in {V\brack t}$ such that
		$\mathcal{F}=\lbrace F\in{V\brack k}: E\subseteq F \rbrace$.
	\end{thm}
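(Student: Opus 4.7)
The plan is to reduce Theorem~\ref{6} to showing that any maximum-sized almost $t$-intersecting family $\mathcal{F}$ is already $t$-intersecting; once this is done, the Erd\H{o}s--Ko--Rado theorem for vector spaces (Frankl--Wilson type, valid under the present hypothesis on $n$) immediately identifies $\mathcal{F}$ as a star $\{F\in{V\brack k}:E\subseteq F\}$. As a lower bound, any such star is itself almost $t$-intersecting of size ${n-t\brack k-t}$, so maximality forces $|\mathcal{F}|\geq {n-t\brack k-t}$.

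Suppose, toward a contradiction, that $\mathcal{F}$ is not $t$-intersecting. The almost $t$-intersecting property says that the bad-pair graph on $\mathcal{F}$, with edges $\{F,F'\}$ whenever $\dim(F\cap F')<t$, has maximum degree one, and therefore its edge set is a matching $\{(A_i,B_i):i=1,\ldots,s\}$ with $s\geq 1$. Deleting one endpoint of each edge yields a $t$-intersecting subfamily $\mathcal{F}^{*}=\mathcal{F}\setminus\{B_1,\ldots,B_s\}$ of size $|\mathcal{F}|-s$, so the EKR bound for vector spaces gives only the weak estimate $|\mathcal{F}|\leq{n-t\brack k-t}+s$.

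The crux of the argument is to sharpen this to the strict inequality $|\mathcal{F}|<{n-t\brack k-t}$. Fix one bad pair $(A,B)=(A_1,B_1)$. Every other $F\in\mathcal{F}$ is $t$-intersecting with both $A$ and $B$, so $F$ contains $T_A+T_B$ for some $T_A\in{A\brack t}$ and $T_B\in{B\brack t}$. Because $T_A\cap T_B\subseteq A\cap B$ has dimension at most $t-1$, the dimension $d:=\dim(T_A+T_B)$ satisfies $d\geq t+1$. Grouping such $F$ by the smallest admissible $d\in\{t+1,\ldots,\min(2t,k)\}$ yields an upper bound $|\mathcal{F}\setminus\{A,B\}|\leq \sum_{d=t+1}^{\min(2t,k)} M_d\,{n-d\brack k-d}$, where $M_d$ enumerates the admissible pairs $(T_A,T_B)$ up to the appropriate overcount and depends on $\dim(A\cap B)$. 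The main obstacle is proving this sum is strictly less than ${n-t\brack k-t}-2$: a typical summand has ratio of order $q^{2t(k-t)-(n-k)(d-t)}$ to ${n-t\brack k-t}$, so the hypothesis $n\geq 2k+1+\delta_{2,q}(1-\delta_{k,t+1})$ is tuned precisely to supply the needed slack, with the $\delta$-corrections absorbing the weaker Gaussian-coefficient savings at $q=2$ and accounting for the boundary case $k=t+1$ (where $T_A+T_B\subseteq F$ between two $(t+1)$-subspaces forces equality $F=T_A+T_B$, producing a sharper count).

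Once this counting estimate is in hand, the assumption $s\geq 1$ produces $|\mathcal{F}|<{n-t\brack k-t}$, contradicting maximality. Hence $s=0$, so $\mathcal{F}$ is $t$-intersecting, and the vector-space EKR theorem concludes $\mathcal{F}=\{F\in{V\brack k}:E\subseteq F\}$ for some $E\in{V\brack t}$.
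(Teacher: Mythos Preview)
Your overall strategy coincides with the paper's: show that if $\mathcal{F}$ is almost $t$-intersecting but not $t$-intersecting then $|\mathcal{F}|<{n-t\brack k-t}$, so a maximum family must be $t$-intersecting and hence a $t$-star. The difficulty is entirely in the counting step, and here your plan has a genuine gap.

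Your bound on $|\mathcal{F}\setminus\{A,B\}|$ uses only the constraint that each $F$ meets both $A$ and $B$ in dimension $\ge t$; it does not use the almost $t$-intersecting property of $\mathcal{F}$ any further. This is too weak. Take the dominant situation $\dim(A\cap B)=t-1$ and $d=t+1$: the number of admissible pairs $(T_A,T_B)$ with $T_A\cap T_B=A\cap B$ is essentially ${k-t+1\brack 1}^{2}$, so your sum is at least of order ${k-t+1\brack 1}^{2}{n-t-1\brack k-t-1}$, and the ratio to ${n-t\brack k-t}$ is roughly $q^{3k-2t-n}$. For $n=2k+1$ this is $q^{k-2t-1}$, which blows up once $k>2t+1$. (Your own heuristic exponent $2t(k-t)-(n-k)(d-t)$, evaluated at $d=t+1$, already requires $n>k+2t(k-t)$, far beyond the stated hypothesis.) So the inequality $|\mathcal{F}|<{n-t\brack k-t}$ cannot be obtained this way in the claimed range of $n$.

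The paper avoids this by working with the $t$-covering number $\tau_t(\mathcal{F})$. The point is that one must exploit the almost $t$-intersecting condition \emph{iteratively}: starting from any $t$-subspace $H$ of a minimum $t$-cover $T$, one repeatedly finds a member of $\mathcal{F}$ meeting the current subspace in dimension $<t$ and enlarges along it, picking up only a single factor ${k-t+1\brack 1}$ per step until dimension $\tau_t(\mathcal{F})$ is reached (Lemma~\ref{2404224}, Proposition~\ref{4}, Corollary~\ref{2407166}). Summing over $H\in{T\brack t}$ gives $|\mathcal{F}|\le{t+1\brack 1}{k-t+1\brack 1}{n-t-1\brack k-t-1}+{t+1\brack 1}$ when $\tau_t(\mathcal{F})=t+1$, with ratio to ${n-t\brack k-t}$ of order $q^{2k-n}<1$. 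The case $\tau_t(\mathcal{F})\ge k+1$ is handled separately via a Bollob\'as-type bound (Lemma~\ref{2407037}). The moral is that a single bad pair gives two factors of ${k-t+1\brack 1}$, which is fatal; the covering-number iteration trades one of them for the much smaller ${t+1\brack 1}$.
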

	
	Theorem  \ref{6} states that maximum-sized $s$-almost $t$-intersecting families  are $t$-intersecting.  
	It is natural to consider families without $t$-intersection property.	
	To present our another  result,  we introduce some notations and a family which is $s$-almost $t$-intersecting but not $t$-intersecting.
	
	Let $n$, $k$, $t$ and $s$ be positive integers with $k\geq t+1$ and $n\geq 2k-t+s$. Suppose $M\in{V\b k}$ and $E\in{M\b t}$.  
	Denote
	$\mh_{1}(M,E)=\left\{F\in{V\b k}: F\cap M=E\right\}$ and
	$\mh_{2}(M,E)=\left\{F\in{M\b k}: E\nsubseteq F\right\}.$
	\begin{con}\label{2406191} 
	Let $n$, $k$, $t$, $s$, $M$ and $E$ be as above. Consider
	$$\left\{F\in{V\b k}: E\subseteq F,\ \dim(F\cap M)\geq t+1\right\}\cup \ma \cup \mb,$$
	where	$\ma$ is an $s$-subset of $\mh_{1}(M,E)$ and $\mb$ is a $\min\left\{s, q^{k-t+1}{t\b 1}\right\}$-subset of $\mh_{2}(M,E)$. 
	\end{con}
		Shan and Zhou \cite{2406097} also studied $s$-almost $t$-intersecting families 
	which are not $t$-intersecting for $t=s=1$.  Our second result focuses on the case $(t,s)\neq (1,1)$.
	\begin{thm}\label{2407163}
		Let $n$, $k$,  $t$ and $s$ be positive integers with  $(t,s)\neq (1,1)$, $k\geq t+2$ and  $n\geq 2k+t+s-1+\delta_{2,q}$. Suppose $\mf\subseteq {V\b k}$ is an $s$-almost $t$-intersecting family which is not $t$-intersecting.  If $\mf$ has  maximum size, then there exist $M\in {V\b k+1}$ and $E\in {M\b t}$ such that 
		$$\mf=\left\{F\in{V\b k}: E\subseteq F,\ \dim(F\cap M)\geq t+1\right\}\cup \ma \cup \mb,$$
		where	$\ma$ is an $s$-subset of $\mh_{1}(M,E)$ and $\mb$ is a $\min\left\{s, q^{k-t+1}{t\b 1}\right\}$-subset of $\mh_{2}(M,E)$.
	\end{thm}

	The rest of this paper is organized as follows. In Section \ref{2407062}, we present some inequalities   needed in this paper.  Then we prove Theorem \ref{6} in Section \ref{2407063}.
	In Section \ref{2407066},  Theorem \ref{2407163} is proved. Finally in  Section \ref{2407065}, we show Proposition \ref{2407081} which is used in Section \ref{2407066}.

	\section{Inequalities concerning Gaussian binomial coefficients}\label{2407062}
	
	We begin with some lemmas about the Gaussian binomial coefficient ${n\b k}$. The first one can be easily checked. 
	
	\begin{lem}\label{1}
		Let $n$ and $k$ be positive integers with $k<n$. Then the following hold.
		\begin{enumerate}[\normalfont(i)]
			\item $q^{k-1}\geq k$.
			\item ${k\b 1}\leq \frac{3}{2}q^{k-1}$ if $q\geq 3$.
			\item $q^{n-k}<\frac{q^{n}-1}{q^{k}-1}<q^{n-k+1}$.
			\item  $q^{k(n-k)}<{n\brack k}<q^{k(n-k+1)}$.	
			\item  If $t$ is a positive integer with $k\geq t$ and $n\geq 2k-t+1$, then
			\begin{equation*}
				{k-t+1\brack 1}^{j-i}{n-j\brack k-j}\leq{n-i\brack k-i}
			\end{equation*}
			for any positive integers $i$ and $j$ with $i\leq j$.
		\end{enumerate}
	\end{lem}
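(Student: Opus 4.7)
The plan is to prove each part by direct manipulation of the product expansion ${n\brack k}=\prod_{i=0}^{k-1}\frac{q^{n-i}-1}{q^{k-i}-1}$, with (iv) being the only part that needs a real argument.

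For (i), I would cross-multiply: the lower bound reduces to $q^n-1>q^n-q^{n-k}$, clear from $q^{n-k}>1$, while the upper bound reduces to $q^{n+1}-q^{n-k+1}>q^n-1$, which follows from $q^n(q-1)\ge q^n\ge q^{n-k+1}$ (using $k\ge 1$ and $q\ge 2$). Part (ii) is then immediate: multiplying the factor-wise bounds of (i) over the $k$ terms of the product form of ${n\brack k}$ gives the claim. For (iii), the identity ${n\brack 1}=\frac{q^n-1}{q-1}<\frac{q^n}{q-1}$ combined with $\frac{q}{q-1}\le\frac{3}{2}$ (which holds exactly when $q\ge 3$) yields the bound $\frac{3}{2}q^{n-1}$.

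For (iv), the key observation is the telescoping identity
\[
\frac{{n-i\brack k-i}}{{n-j\brack k-j}}=\prod_{s=i}^{j-1}\frac{q^{n-s}-1}{q^{k-s}-1},
\]
a product of $j-i$ factors (if $i=j$ the statement is trivial, and if $j>k$ the right-hand side of the claim vanishes). It then suffices to show that each single factor dominates ${k-t+1\brack 1}=\frac{q^{k-t+1}-1}{q-1}$. Writing $b=k-s\ge k-j+1\ge 1$, $r=k-t+1$, and $m=n-k\ge r$ by hypothesis, cross-multiplication turns this factor-wise claim into
\[
q^{b+m}(q-1)+q^b+q^r\ge q^{b+r}+q.
\]

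I expect the main obstacle to be a clean case analysis of this polynomial inequality. When $m\ge r+1$, the term $q^{b+m}(q-1)\ge q^{b+m}\ge q\cdot q^{b+r}$ already dominates $q^{b+r}$, and $q^b+q^r\ge q$ absorbs the extra. The extremal case $m=r$ collapses the inequality to $q^{b+r}(q-2)+q^b+q^r\ge q$, which is trivial for $q\ge 3$ and, for $q=2$, reduces to $2^b+2^r\ge 2$, holding since $b,r\ge 1$. Raising the single-factor inequality to the power $j-i$ then yields the desired bound.
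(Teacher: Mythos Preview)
Your argument is correct. The paper itself omits any proof of this lemma, remarking only that it ``can be easily checked,'' so there is nothing to compare against; your write-up supplies exactly the kind of routine verification the authors left to the reader. One small slip: in the parenthetical handling of (iv) you write ``if $j>k$ the right-hand side of the claim vanishes,'' but it is the \emph{left}-hand side ${k-t+1\brack 1}^{j-i}{n-j\brack k-j}$ that vanishes (since $k-j<0$ forces ${n-j\brack k-j}=0$), making the inequality trivial. The telescoping reduction to the single-factor bound $\frac{q^{n-s}-1}{q^{k-s}-1}\ge{k-t+1\brack 1}$ and your case split on $m=n-k$ versus $r=k-t+1$ are clean and complete.
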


	\begin{lem}\label{2405261}{\textnormal{(\cite[Lemma 2.1]{2403213}) }}
		Let $n$ and $k$ be non-negative integers with $k\leq n$. Then the following hold.
		\begin{enumerate}[\normalfont(i)]
			\item If $q\geq 2$, then
			$${n\brack k}\leq  \dfrac{7}{2}q^{k(n-k)}.$$	
			\item If $q\geq3$, then $${n\brack k}\leq  2q^{k(n-k)}.$$	
		\end{enumerate}
	\end{lem}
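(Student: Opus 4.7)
The plan is to bound the ratio $\binom{n}{k}_q / q^{k(n-k)}$ uniformly in $n$ by the Euler function and then estimate that function for $q=2$ and $q\ge 3$.

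First, expanding the Gaussian binomial,
$$\binom{n}{k}_q=\prod_{i=1}^{k}\frac{q^{n-k+i}-1}{q^{i}-1}=q^{k(n-k)}\prod_{i=1}^{k}\frac{1-q^{-(n-k+i)}}{1-q^{-i}}.$$
Since each numerator satisfies $1-q^{-(n-k+i)}\le 1$, this yields
$$\binom{n}{k}_q\le q^{k(n-k)}\,P_k(q),\qquad P_k(q):=\prod_{i=1}^{k}\frac{1}{1-q^{-i}},$$
and $P_k(q)\le P(q):=\prod_{i=1}^{\infty}(1-q^{-i})^{-1}$. Moreover, each factor $(1-q^{-i})^{-1}$ is strictly decreasing in $q$, so $P(q)$ is decreasing in $q$; in particular $P(q)\le P(3)$ for $q\ge 3$ and $P(q)\le P(2)$ for $q\ge 2$. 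It therefore suffices to prove $P(2)\le 7/2$ and $P(3)\le 2$.

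To bound $P(q)$ from above, I would split the product at some threshold $N$:
$$P(q)=\prod_{i=1}^{N-1}(1-q^{-i})^{-1}\cdot\prod_{i=N}^{\infty}(1-q^{-i})^{-1},$$
computing the finite prefix explicitly. For the tail, the Weierstrass product inequality gives
$$\prod_{i=N}^{\infty}(1-q^{-i})\ge 1-\sum_{i=N}^{\infty}q^{-i}=1-\frac{q^{-N+1}}{q-1},$$
so
$$\prod_{i=N}^{\infty}(1-q^{-i})^{-1}\le\left(1-\frac{q^{-N+1}}{q-1}\right)^{-1}.$$
Choosing $N$ large enough (small $N$ already suffices for $q=3$, and moderately large for $q=2$) then produces the numerical bounds $7/2$ and $2$ after a direct calculation.

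The only genuinely delicate part is verifying the precise constants $7/2$ and $2$: the infinite product for $q=2$ is close to $3.463$, so the tail estimate must be sharp enough not to overshoot $3.5$. This is handled by taking $N$ sufficiently large in the Weierstrass step so that the tail contributes a factor arbitrarily close to $1$, while the finite prefix is computed exactly. For $q=3$ the product is roughly $1.785$, leaving ample slack below $2$, so only a short finite prefix together with a crude tail bound is needed.
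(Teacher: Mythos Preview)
Your argument is correct. The factorization
\[
\binom{n}{k}_q = q^{k(n-k)}\prod_{i=1}^{k}\frac{1-q^{-(n-k+i)}}{1-q^{-i}}
\]
is valid, the passage to the partial and then infinite product $P(q)=\prod_{i\ge 1}(1-q^{-i})^{-1}$ is justified since every factor is at least $1$, and $P(q)$ is indeed decreasing in $q$. The Weierstrass tail estimate you describe is legitimate and, as you note, the numerical margins are comfortable: for $q=3$ already $N=2$ gives
\[
P(3)\le \frac{3}{2}\cdot\Bigl(1-\tfrac{1}{6}\Bigr)^{-1}=\frac{9}{5}<2,
\]
and for $q=2$ taking $N=5$ yields
\[
P(2)\le \frac{2\cdot 4\cdot 8\cdot 16}{3\cdot 7\cdot 15}\cdot\Bigl(1-\tfrac{1}{16}\Bigr)^{-1}=\frac{16384}{4725}<\frac{7}{2}.
\]

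There is nothing to compare against in the present paper: the lemma is quoted without proof from Ihringer's thesis (reference~\cite{2403213}). Your argument is the standard one for bounding Gaussian binomials by the reciprocal of the Euler function, and is almost certainly what the cited source does as well.
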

	
		For positive integers $n$, $k$, $t$, $s$ and $x$, write
	\begin{equation}\label{2407186}
		\begin{aligned}
			f(n,k,t,s,x)=&\ {x\brack t}{k-t+1\brack 1}^{x-t}{n-x\brack k-x}+s{x\brack t}\sum_{i=0}^{x-t-1}{k-t+1\brack 1}^{i}.\\
			g(n,k,t,s, x)= &\ {x-t\brack 1}{n-t-1\brack k-t-1}+q^{x-t}{k-x+1\brack 1}{k-t+1\brack 1}{n-t-2\brack k-t-2}\\
			&+s q^{x-t}{k-x+1\brack 1}+q^{x -t+1}{t\brack 1}{k-t\brack 1}{n-x \brack k-x}+2s.
		\end{aligned}
	\end{equation}
	\begin{lem}\label{5}
		Let $n$, $k$, $t$ and $s$ be positive integers with  $k\geq t+2$ and $n\geq 2k+s+\delta_{2,q}$. Then $f(n,k,t,s,x)> f(n,k,t,s,x+1)$  for $x\in \lbrace t+1,\ldots, k-1 \rbrace$.
	\end{lem}
	\begin{proof} 
		Note that $$f(n,k,t,s,x)\geq {x\b t}{k-t+1\brack 1}^{x-t}{n-x\brack k-x}.$$ Then for any $x\in \lbrace t+1,t+2,\ldots, k-1 \rbrace$, we have  
		\begin{equation*}
			\begin{aligned}
				\frac{f(n,k,t,s, x+1)}{f(n,k,t,s, x)}\leq  \frac{(q^{x+1}-1)(q^{k-x}-1){k-t+1\b1}}{(q^{x+1-t}-1)(q^{n-x}-1)}+\frac{s(q^{x+1}-1)\sum_{i=0}^{x-t}{k-t+1\b 1}^{i}}{(q^{x+1-t}-1){k-t+1\b 1}^{x-t}{n-x\b k-x}}.
			\end{aligned}
		\end{equation*}
		
		Suppose $q\geq 3$. It is routine to check that
		$$ \frac{8}{9}q^{x+1-t}\leq q^{x+1-t}-1,\quad \sum_{i=0}^{x-t}{k-t+1\b 1}^{i}\leq \frac{1}{12}{k-t+1\b 1}^{x-t+1}.$$
		By Lemma \ref{1},  we obtain
		\begin{equation*}\label{2406081}
			\frac{f(n,k,t,s,x+1)}{f(n,k,t,s,x)}\leq\frac{27q^{x+1}\cdot q^{k-t}}{16q^{x+1-t}\cdot q^{n-k}}+\frac{9s q^{x+1}\cdot q^{k-t}}{64q^{x+1-t}\cdot q^{(k-x)(n-k)}}\leq \frac{9}{16}+\frac{3}{64}<1.
		\end{equation*}
		
		Suppose $q=2$. 
		We have
		$$\frac{3}{4}q^{x+1-t}\leq q^{x+1-t}-1,\quad \sum_{i=0}^{x-t}{k-t+1\b 1}^{i}\leq \frac{1}{6}{k-t+1\b 1}^{x-t+1}.$$
		By Lemma \ref{1}, 
		we get 
		\begin{equation*}\label{2406082}
			\frac{f(n,k,t,,s,x+1)}{f(n,k,t,s, x)}\leq\frac{4q^{x+1}\cdot q^{k-t+1}}{3q^{x+1-t}\cdot q^{n-k}}+\frac{2s q^{x+1}\cdot q^{k-t+1}}{9q^{x+1-t}\cdot q^{(k-x)(n-k)}}\leq \frac{2}{3}+\frac{1}{9}<1.
		\end{equation*}
		
		Hence, the desired result follows.
	\end{proof}
	
	\begin{lem}\label{2407176}
		Let $n$, $k$, $t$ and $s$ be positive integers with  $k\geq t+3$ and $n\geq 2k+s+\delta_{2,q}$. Then $g(n,k,t,s,x)< g(n,k,t,s,x+1)$  for $x\in \lbrace t+2,\ldots, k-1 \rbrace$.
	\end{lem}
	\begin{proof}
		By Lemma \ref{1}, we obtain 
		\begin{equation*}
			\begin{aligned}
				\frac{q^{n-t-1}-1}{q^{k-t-1}-1}-{k-t+1\b 1} -q {t\b 1}{k-t\b 1}
				\geq q^{n-k}-\frac{3}{2}q^{k-t}-\frac{9}{4}q^{k-1}
				\geq q^{k+1}-\frac{15}{4}q^{k-1}\geq 1
			\end{aligned}
		\end{equation*}
		for $q\geq 3$, and 
		\begin{equation*}
			\begin{aligned}
				\frac{q^{n-t-1}-1}{q^{k-t-1}-1}-{k-t+1\b 1} -q {t\b 1}{k-t\b 1}
				\geq  q^{k+s +1}-q^{k-t+1}-q^{k+1}\geq 1
			\end{aligned}
		\end{equation*}
		for $q=2$.
		Therefore,  we have 
		\begin{equation*}
			\begin{aligned}
				&\ g(n,k,t,x+1)-g(n,k,t,x)\\
				=&\ q^{x-t}\frac{q^{n-t-1}-1}{q^{k-t-1}-1}{n-t-2\b k-t-2}-q^{x-t}{k-t+1\b1}{n-t-2\b k-t-2}-q^{x-t+1}{t\b 1}{k-t\b 1}{n-x\b k-x}\\
				&\ -s q^{x-t}
				+q^{x-t+2}{t\b 1}{k-t\b 1}{n-x-1\b k-x-1}\\
				\geq &\ q^{x-t}{n-t-2\b k-t-2} \left( \frac{q^{n-t-1}-1}{q^{k-t-1}-1}-{k-t+1\b1}\right)  -q^{x-t+1}{t\b 1}{k-t\b 1}{n-t-2\b k-t-2}
				 -s q^{x-t}\\
				=&\ q^{x-t}{n-t-2\b k-t-2}\left(\frac{q^{n-t-1}-1}{q^{k-t-1}-1}-{k-t+1\b 1} -q {t\b 1}{k-t\b 1}\right)-s q^{x-t}\\
				\geq &\ q^{x-t}\cdot q^{n-k}-s q^{x-t}>0,
			\end{aligned}
		\end{equation*}
		as desired.
	\end{proof}

	\begin{lem}\textnormal{(\cite{04162})}\label{2404234}
		Let $n$, $a$, $b$, $b^{\prime}$, $i$ and $i^{\prime}$ be non-negative integers with $0\leq i^{\prime}\leq i\leq a$ and $0\leq b^{\prime}-i^{\prime}\leq b-i\leq n-a$. Suppose $A\in {V\brack a}$ and  $C\in {V\brack b^{\prime}}$ with $\dim(C\cap A)=i^{\prime}$. Then 
		$$\left| \left\{ B\in {V\brack b}: C\subseteq B,\ \dim(B\cap A)=i    \right\}\right| =q^{(a-i)((b-i)-(b^{\prime}-i^{\prime}))}{n-a-(b^{\prime}-i^{\prime})\brack (b-i)-(b^{\prime}-i^{\prime})} {a-i^{\prime}\brack i-i^{\prime}}.$$  
	\end{lem}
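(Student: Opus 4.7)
The plan is to reduce the count to the classical formula for the number of subspaces meeting a fixed subspace in a prescribed dimension, by passing to the quotient $V/C$.

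First I would set $\overline{V}=V/C$, which has dimension $n-b'$, and $\overline{A}=(A+C)/C$, whose dimension is $(a+b'-i')-b'=a-i'$. The $b$-subspaces $B$ of $V$ containing $C$ are in bijection with $(b-b')$-subspaces $\overline{B}$ of $\overline{V}$ via $B\mapsto B/C$, so the count reduces to a count in $\overline{V}$.

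The key translation is that $\dim(B\cap A)=i$ if and only if $\dim(\overline{B}\cap\overline{A})=i-i'$. To verify this I would first prove the identity $B\cap(A+C)=(B\cap A)+C$; the nontrivial inclusion $\subseteq$ uses $C\subseteq B$, since any $v=a+c$ with $a\in A$, $c\in C$, $v\in B$ satisfies $a=v-c\in B$ and hence lies in $B\cap A$. Now $(B\cap A)\cap C=A\cap C$ has dimension $i'$, so the usual dimension formula yields $\dim((B\cap A)+C)=i+b'-i'$. Since $(B/C)\cap((A+C)/C)=(B\cap(A+C))/C$, passing to the quotient gives $\dim(\overline{B}\cap\overline{A})=(i+b'-i')-b'=i-i'$, and the argument is reversible.

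Finally I would invoke the standard count: the number of $k$-subspaces of an $N$-dimensional space meeting a fixed $M$-subspace in exactly a $j$-dimensional subspace is $q^{(M-j)(k-j)}{M\brack j}{N-M\brack k-j}$. Applying this with $N=n-b'$, $M=a-i'$, $k=b-b'$ and $j=i-i'$, one reads off $M-j=a-i$, $k-j=(b-i)-(b'-i')$ and $N-M=n-a-(b'-i')$, producing exactly the stated formula. The only genuinely interesting step is the identity $B\cap(A+C)=(B\cap A)+C$ and the attendant dimensional bookkeeping through the quotient map; the terminal counting formula is entirely off-the-shelf.
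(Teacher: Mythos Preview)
Your argument is correct. The quotient by $C$ cleanly reduces the problem to the classical count of subspaces with prescribed intersection dimension, and your verification of the identity $B\cap(A+C)=(B\cap A)+C$ (using $C\subseteq B$) together with the dimension bookkeeping is sound; the final substitution into the standard formula $q^{(M-j)(k-j)}{M\brack j}{N-M\brack k-j}$ matches the claimed expression exactly.

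There is nothing to compare against in the paper itself: the authors simply cite this lemma from \cite{04162} and give no proof. Your self-contained quotient argument is a perfectly standard way to derive such counting formulas and would serve well as an independent proof; it is essentially the same idea one finds in the cited source, where such counts are obtained by first fixing the intersection with $A$ and then counting extensions, which is equivalent to your quotient picture.
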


	Let  $n$, $k$, $t$ and $s$ be positive integers with $k\geq t+1$ and $n\geq 2k-t+s$.	Assume $M\in {V\b k+1}$ and $E\in {M\brack t}$. Suppose that $\ma$ is an $s$-subset of $\mh_{1}(M,E)$ and $\mb$ is a $\min\left\{ s, q^{k-t+1}{t\b 1}\right\}$-subset of $\mh_{2}(M,E)$.
	By Lemma \ref{2404234}, it is routine to check 
	\begin{equation*}
		\begin{aligned}
			h(n,k,t, s):&=\left|\left\{F\in{V\b k}: E\subseteq F,\ \dim(F\cap M)\geq t+1\right\}\cup \ma \cup\mb\right|\\
			&={n-t\brack k-t}-q^{(k-t+1)(k-t)}{n-k-1\brack k-t}+s+ \min\left\{ s, q^{k-t+1}{t\b 1}\right\}.
		\end{aligned}
	\end{equation*}

	\begin{lem}\label{2405262}
		Let $n$, $k$, $t$ and $s$ be positive integers with $(t,s)\neq  (1,1)$, $k\geq t+2$ and $n\geq 2k+t+s-1+\delta_{2,q}$. Then the following hold.
		\begin{enumerate}[\normalfont(i)]
			\item If $q\geq 2$, then
			$$h(n,k,t,s)>\frac{23}{24}{k-t+1\b1}{n-t-1\b k-t-1}.$$
			\item If $q\geq 3$, then
			$$h(n,k,t,s)>\frac{71}{72}{k-t+1\b1}{n-t-1\b k-t-1}.$$
		\end{enumerate}	
	\end{lem}
	\begin{proof}
		From   \cite[Lemma 2.4 and (2.11)]{2405263}, we obtain 
		$$h(n,k,t,s)> {k-t+1\b 1}{n-t-1\b k-t-1}-q{k-t+1\b 2}{n-t-2\b k-t-2}.$$
		This together with Lemma \ref{1}  yields
		$$\frac{h(n,k,t,s)}{{k-t+1\b1}{n-t-1\b k-t-1}}>1-\frac{1}{(q^{2}-1)q^{n-2k+t-1}},$$
		which implies the  desired results.
	\end{proof}

	\section{Proof of Theorem \ref{6}}\label{2407063}
	
	Let $\mf\subseteq{V\b k}$. A subspace $W$  of $V$ is called a \textit{$t$-cover} of $\mf$ if $\dim(W\cap F)\geq t$ for any $F\in \mf$.
	Define the \textit{$t$-covering number} $\tau_{t}(\mf)$ of $\mf$ as the minimum dimension of a $t$-cover of $\mf$.
	Before proving Theorem \ref{6},  we  show some  properties of $s$-almost $t$-intersecting families.

	\begin{lem}\label{2404224}
	Let $n$, $k$, $t$ and $s$ be positive integers with $k\geq t+1$ and $n\geq 2k$. Suppose that $\mathcal{F}\subseteq {V\brack k}$ is $s$-almost $t$-intersecting  and $S\in {V\b s}$. If  there exists $F\in\mf$ such that 
	$\dim(F\cap S)=m< t$, then 
	$$\left| \mathcal{F}_{S}\right|\leq {k-t+1\brack 1}^{t-m}\left| \mathcal{F}_{R} \right|+s$$
	for some $R\in {V\brack s+t-m}$ with $S\subseteq R$.
	\end{lem}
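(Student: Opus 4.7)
The plan is to fix some $F_0\in\mf$ with $\dim(F_0\cap S)=m$, use the almost $t$-intersecting property at $F_0$ to force every $F\in \mf_S$ apart from at most one exceptional member to satisfy $\dim(F\cap F_0)\ge t$, and then assign to each such $F$ a witness subspace sitting between $S$ and $F$ whose $F$-dependence is tightly controlled. Pigeonholing the good $F$'s over the few possible witnesses will yield the desired $R$.

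Set $T=S\cap F_0$, so $\dim T=m$. Because $\mf$ is almost $t$-intersecting, at most one $F^*\in\mf$ satisfies $\dim(F^*\cap F_0)<t$. For every $F\in\mf_S\setminus\{F^*\}$, the containment $S\subseteq F$ gives $T\subseteq F\cap F_0$, and since $\dim(F\cap F_0)\ge t$ one may select a $t$-dimensional subspace $W_F\subseteq F\cap F_0$ containing $T$ and set $R_F=W_F+S$. Because $W_F\subseteq F_0$ forces $W_F\cap S\subseteq F_0\cap S=T\subseteq W_F$, we get $W_F\cap S=T$, so $\dim R_F=s+t-m$; and a short check---if $x=w+s_0\in R_F\cap F_0$ with $w\in W_F$ and $s_0\in S$, then $s_0=x-w\in F_0\cap S=T\subseteq W_F$, hence $x\in W_F$---shows $R_F\cap F_0=W_F$. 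Consequently the map $W_F\mapsto R_F$ is injective, so the number of distinct $R_F$'s produced is at most the number of $t$-dimensional subspaces of $F_0$ containing $T$, namely ${k-m\brack t-m}$.

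Since $S\subseteq R_F\subseteq F$, each non-exceptional $F$ lies in $\mf_{R_F}$, and pigeonholing the at-least $|\mf_S|-1$ such $F$'s over the at-most ${k-m\brack t-m}$ possible values of $R_F$ produces a single $R\in{V\brack s+t-m}$ with $S\subseteq R$ and $|\mf_R|\ge(|\mf_S|-1)/{k-m\brack t-m}$. The factor-by-factor estimate $(q^{a+j}-1)/(q^j-1)\le(q^{a+1}-1)/(q-1)$, applied with $a=k-t$, then gives ${k-m\brack t-m}\le{k-t+1\brack 1}^{t-m}$, and rearranging yields $|\mf_S|\le{k-t+1\brack 1}^{t-m}|\mf_R|+1$. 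The one delicate step is the identity $R_F\cap F_0=W_F$; without it, distinct witnesses $W_F$ inside $F_0$ could collapse to a single $R_F$, and the bound on the image of $F\mapsto R_F$ would be lost. Everything else is routine dimension accounting combined with pigeonhole.
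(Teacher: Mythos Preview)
Your proof is correct and follows essentially the same route as the paper: there one sets $\mi=\{I\in\mf:\dim(I\cap F)\ge t\}$, uses $|\mf_S|\le|\mi_S|+1$, and then invokes \cite[Lemma~2.4]{1} to obtain $|\mi_S|\le{k-m\brack t-m}|\mi_R|$ for some $R\supseteq S$ of dimension $s+t-m$; your construction of the witnesses $W_F$ and $R_F=W_F+S$ is precisely a self-contained derivation of that cited inequality. One small remark: the injectivity of $W_F\mapsto R_F$ is not actually needed for the pigeonhole step, since the number of distinct $R_F$'s is automatically at most the number of possible $W_F$'s (because $R_F$ is a function of $W_F$)---collisions would only shrink the image, which helps rather than hurts the bound.
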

	\begin{proof}
	Let $\mi=\mf\bs\md_{\mf}(F;t)$. 
	Notice that $F$ is a $t$-cover of $\mi$. By \cite[Lemma 2.4]{1}, there exists an $(s+t-m)$-subspace $R$ with $S\subseteq R$ such that 
	\begin{equation}\label{2407165}
		\left| \mathcal{I}_{S}\right|\leq {k-m\brack t-m}\left| \mathcal{I}_{R} \right|\leq {k-m\brack t-m}\left| \mathcal{F}_{R} \right|.
	\end{equation}
	Since $\mf$ is $s$-almost $t$-intersecting, we have $\left|\mf\bs \mi\right|\leq s$.  
	The fact $\mi\subseteq \mf$ implies
	$\left|\mf_{S}\right|\leq \left|\mi_{S}\right|+s$. 
	It is simple to verify that $$\frac{(q^{k-m}-1)(q^{k-m-1}-1)\cdots (q^{k-t+1}-1)}{(q^{t-m}-1)(q^{t-m-1}-1)\cdots(q-1)}\leq \left( \frac{q^{k-t+1}-1}{q-1}\right)^{t-m},$$ i.e., ${k-m\b t-m}\leq {k-t+1\b 1}^{t-m}$.
	Then from (\ref{2407165}) we get the desired result.	
	\end{proof}
	
	\begin{pr1}\label{4}
		Let $n$, $k$, $t$ and $s$ be positive integers with $k\geq t+1$ and $n\geq 2k$. Suppose that $\mathcal{F}\subseteq {V\brack k}$ is $s$-almost $t$-intersecting, and $\mb\subseteq \mf$. If  $A$ is a $t$-cover of $\mf \bs\mb$ with $\dim(A)<\tau_{t}(\mathcal{F})\le k$, then there exists a subspace $A_{u}$ of $V$ satisfying $A\subseteq A_{u}$, $\tau_{t}(\mf)\leq \dim(A_{u})$ and 
		$$\left| \mb_{A}\right|\leq {k-t+1\b 1}^{\dim(A_{u})-\dim(A)}\left|\mb_{A_{u}}\right|+s\sum_{i=0}^{\tau_{t}(\mf)-\dim(A)-1}{k-t+1\b 1}^{i}.$$
	\end{pr1}
	\begin{proof}
		
		Since $\dim(A)<\tau_{t}(\mathcal{F})$ and $\dim(A\cap F)\geq t$ for each $F\in \mf\bs \mb$, 
		there exists  $F_{1}\in \mb$ such that $\dim(F_{1}\cap A)<t$. 
		Note that $\mb$ is also $s$-almost $t$-intersecting. By Lemma \ref{2404224}, there exists a subspace $A_{2}$ with $A\subsetneq A_{2}$ and 
		$$\left|\mb_{A}\right|\leq {k-t+1\b 1}^{\dim(A_{2})-\dim(A_{})}\left|\mb_{A_{2}}\right|+s.$$
		If $\dim(A_{2})\geq \tau_{t}(\mf)$, then the desired result follows. If $\dim(A_{2})<\tau_{t}(\mathcal{F})$, then  apply Lemma \ref{2404224} on $A_{2}$.  Using Lemma \ref{2404224} repeatedly,
		we finally conclude that there exists an ascending chain of subspaces $A=A_{1}\subsetneq A_{2}\subsetneq\ldots \subsetneq A_{u}$  such that 
		$\dim(A_{u-1})<\tau_{t}(\mathcal{F})\leq \dim(A_{u})$ and 
		$$\left|\mb_{A_{i}}\right|\leq {k-t+1\brack 1}^{\dim(A_{i+1})-\dim(A_{i})}\left| \mb_{A_{i+1}}\right|+s, \quad 1\leq i\leq u-1.$$
		Hence, we derive 
		\begin{equation*}
			\left| \mb_{A}\right|\leq {k-t+1\brack 1}^{\dim(A_{u})-\dim(A)}\left| \mb_{A_{u}}\right|+s\cdot\sum_{i=1}^{u-1}{k-t+1\brack 1}^{\dim(A_{i})-\dim(A)}.
		\end{equation*}
		This together with $\dim(A_{u-1})\leq \tau_{t}(\mf)-1$ yields the desired result. 
	\end{proof}
	
	Setting $\mb=\mf$ in Proposition \ref{4}, by Lemma \ref{1} (v), we get an upper bound on $\left|\mf_{A}\right|$.
	
	\begin{cor}\label{2407166}
		Let $n$, $k$ and $t$ be positive integers with $k\geq t+1$ and $n\geq 2k$. 
		Suppose that $\mathcal{F}\subseteq {V\brack k}$ is $s$-almost $t$-intersecting. If $A$ is a subspace of $V$ with $\dim(A)<\tau_{t}(\mathcal{F})\le k$, then
		$$\left| \mathcal{F}_{A} \right|\leq {k-t+1\brack 1}^{\tau_{t}(\mathcal{F})-\dim(A)}{n-\tau_{t}(\mathcal{F})\brack k-\tau_{t}(\mathcal{F})}+s\sum_{i=0}^{\tau_{t}(\mathcal{F})-\dim(A)-1}{k-t+1\brack 1}^{i}.$$
	\end{cor}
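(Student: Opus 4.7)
The statement is presented as an immediate corollary of Proposition \ref{4}, so the plan is simply to specialize that proposition to $\mb=\mf$ and then rewrite the resulting bound into the form claimed here via Lemma \ref{1}(iv). There is no real obstacle to the argument; it is essentially bookkeeping, and the only thing that needs care is that the indices line up correctly when Lemma \ref{1}(iv) is invoked.

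First, I would take $\mb=\mf$ in Proposition \ref{4}. The hypothesis that $A$ be a $t$-cover of $\mf\setminus\mb$ is vacuously satisfied since $\mf\setminus\mb=\emptyset$, and the remaining hypothesis $\dim(A)<\tau_t(\mf)\le k$ is exactly the assumption of the corollary. Proposition \ref{4} therefore produces a subspace $A_u$ with $A\subseteq A_u$ and $\dim(A_u)\ge\tau_t(\mf)$ such that
$$\left|\mf_A\right|\le {k-t+1\b 1}^{\dim(A_u)-\dim(A)}\left|\mf_{A_u}\right|+\sum_{i=0}^{\tau_t(\mf)-\dim(A)-1}{k-t+1\b 1}^i.$$
The factor $\left|\mf_{A_u}\right|$ is trivially bounded above by ${n-\dim(A_u)\b k-\dim(A_u)}$, because every element of $\mf_{A_u}$ is a $k$-subspace of $V$ containing $A_u$.

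Second, I would invoke Lemma \ref{1}(iv) with the choice $i:=\tau_t(\mf)$ and $j:=\dim(A_u)$. Both hypotheses $i\le j$ and $n\ge 2k-t+1$ are met (the second because $n\ge 2k$), so the lemma yields
$${k-t+1\b 1}^{\dim(A_u)-\tau_t(\mf)}{n-\dim(A_u)\b k-\dim(A_u)}\le {n-\tau_t(\mf)\b k-\tau_t(\mf)}.$$
Splitting the exponent as ${k-t+1\b 1}^{\dim(A_u)-\dim(A)}={k-t+1\b 1}^{\tau_t(\mf)-\dim(A)}\cdot{k-t+1\b 1}^{\dim(A_u)-\tau_t(\mf)}$ and substituting the displayed inequality into the bound from the previous paragraph produces precisely the estimate asserted by the corollary.
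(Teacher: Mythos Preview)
Your proof is correct and follows exactly the same approach as the paper, which simply says ``Setting $\mb=\mf$ in Proposition \ref{4}, by Lemma \ref{1} (iv), we get an upper bound on $\left|\mf_{A}\right|$.'' Your observation that the $t$-cover hypothesis is vacuous for $\mf\setminus\mf=\emptyset$, together with the trivial bound $|\mf_{A_u}|\le{n-\dim(A_u)\brack k-\dim(A_u)}$ and the exponent-splitting via Lemma \ref{1}(iv), is precisely the intended argument.
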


	 \begin{lem}\label{2407037}
		Let $n$, $k$, $t$ and $s$ be positive integers with $k\geq t+1$ and $n\geq 2k$. If $\mf\subseteq {V\b k}$ is an $s$-almost $t$-intersecting family with $\tau_t(\mf)\geq k+1$, then $|\mf|\leq s\binom{2k-2t+2}{k-t+1}$.
	\end{lem}
	\begin{proof}
		Let $B_{1}\in \mf$. Since $\tau_t(\mf)\geq k+1$, we have  $\dim(B_{1}\cap A_{1})<t$ for some $A_{1}\in \mf$.
		If $\md_{\mf}(A_{1};t)=\mf$, then there is nothing to do. If $\md_{\mf}(A_{1};t)\subsetneq\mf$, then choose $B_{2}\in \mf\backslash \md_{\mf}(A_{1};t)$. There exists $A_{2}\in \mf$ such that $\dim(A_{2}\cap B_{2})<t$ due to $\tau_t(\mf)\geq k+1$. Repeating the process above, we finally conclude that there exist two series subspaces $A_{1}, A_{2}, \ldots, A_{m}$ and $B_{1}, B_{2},\ldots, B_{m}$ such that:
		\begin{enumerate}[\normalfont(i)]
			\item $\dim(A_{i}\cap B_{i})<t$ for any $i$.
			\item $\dim(A_{i}\cap B_{j})\geq t$ for any $i<j$.
			\item $\bigcup _{i=1}^{m}\md_{\mf}(A_{i};t)=\mf$.
		\end{enumerate}
	    By (i), (ii) and  \cite[Theorem 5]{2406291}, we know $m\leq \binom{2k-2t+2}{k-t+1}$. This together with (iii) and $\left|\md_{\mf}(F;t)\right|\leq s$ for any $F\in \mf$ yields the desired result.
	\end{proof}
	
	\begin{proof}[\bf Proof of Theorem \ref{6}] Note that $\tau_{t}(\mf)\geq t$. If $\tau_{t}(\mf)=t$, then the desired result follows from the  maximality of $\mf$. Next we consider the case $\tau_{t}(\mf)\geq t+1$. 
		To finish our proof, it is sufficient to show 
		\begin{equation*}\label{2406151}
			\left|\mf\right|< {n-t\b k-t}.
		\end{equation*}

		\medskip
		\noindent \textbf{Case 1.} $t+1\leq \tau_{t}(\mathcal{F})\leq k$. 
		\medskip
		
		Let $T$ be a  $t$-cover of $\mf$ with dimension $\tau_{t}(\mf)$. Then 
		$\mathcal{F}=\bigcup_{H\in {T\brack t}}\mathcal{F}_{H}$.
		%Notice that $\tau_{t}(\mf)=t+1$ if $k=t+1$ and $n\geq 2k+1+\delta_{2,q}$ if $k\geq t+2$. 
		It follows   from Lemma \ref{5} and Corollary \ref{2407166} that 
		\begin{equation}\label{2407167}
			\left| \mathcal{F}\right|\leq {t+1\b 1}{k-t+1\b 1}{n-t-1\b k-t-1}+s{t+1\b 1}.
		\end{equation}

		\medskip
		\noindent \textbf{Case 1.1.} $q\geq 3$. 
		\medskip
		
		By (\ref{2407167}), Lemma \ref{1} and $n\geq 2k+s$, we have 
		\begin{equation*}
			\begin{aligned}
				\frac{\left|\mf\right|}{{n-t\b k-t}}\leq \frac{9}{4q^{n-2k}}+\frac{3s}{2q^{n-k-t}}\leq 
				\frac{9}{4q^{s}}+\frac{3}{2q^{2}}\cdot\frac{s}{q^{s-1}}\leq  \frac{3}{4}+\frac{1}{6}<1.
			\end{aligned}
		\end{equation*}

		\medskip
		\noindent \textbf{Case 1.2.} $q= 2$. 
		\medskip
		
		If $k=t+1$, then  by (\ref{2407167}), Lemma \ref{1}  and $n\geq 2k+s$, we get 
		\begin{equation*}\label{2405287}
			\begin{aligned}
				\frac{\left|\mf\right|}{{n-t\b k-t}}\leq \frac{3(q^{t+1}-1)}{q^{n-t}-1}+\frac{s(q^{t+1}-1)}{q^{n-t}-1}<\frac{3}{q^{n-2t-1}}+\frac{s}{q^{n-2t-1}}\leq \frac{3}{q^{t+s}}+\frac{s}{q^{t+1}\cdot q^{s-1}}\leq 1.
			\end{aligned}
		\end{equation*}
		
		If $k\geq t+2$, then   by (\ref{2407167}), Lemma \ref{1}  and $n\geq 2k+s +1$, we obtain
		\begin{equation*}\label{2405286}
			\begin{aligned}
				\frac{\left|\mf\right|}{{n-t\b k-t}}\leq  \frac{(q^{t+1}-1)(q^{k-t+1}-1)}{q^{n-k}}+\frac{s(q^{t+1}-1)}{q^{n-k}}
				\leq  \frac{q^{t+1}(q^{k-t+1}-1)}{q^{k+2}}+\frac{q^{t+1}-1}{q^{k+2}}\cdot\frac{s}{q^{s-1}}
				<1.
			\end{aligned}
		\end{equation*}

		%From  (\ref{2405287}) and (\ref{2405286}), we obtain  (\ref{2406151}).
		
		\medskip
		\noindent \textbf{Case 2.} $\tau_{t}(\mathcal{F})\geq k+1$. 
		\medskip
		
		Note that $n\geq 2k+s$ and $q^{z}\geq z+1$ for non-negative integer $z$.
		By Lemma \ref{1}, we have
		\begin{equation*}
			\begin{aligned}
				\frac{s\binom{2k+2-2t}{k+1-t}}{{n-t\b k-t}}=\frac{2s(2+\frac{1}{k-t})\binom{2k-2t}{k-t-1}}{\frac{q^{n-t}-1}{q^{k-t}-1}{n-t-1\b k-t-1}}
				\leq \frac{6s}{q^{n-k}}\cdot \left(\frac{k-t+2}{q^{n-k}}\right)^{k-t-1}
				\leq  \frac{3}{4}\cdot  \left(\frac{k-t+2}{n-k+1}\right)^{k-t-1}\leq \frac{3}{4}.
			\end{aligned}
		\end{equation*}
		This together with Lemma \ref{2407037} yields the desired result.
	\end{proof}

	\section{Proof of Theorem \ref{2407163}} \label{2407066}

	Before starting our proof, we give some auxiliary results.

		\begin{lem}\label{2405283}
		Let  $x_{1}$, $x_{2}$ and $x_{3}$ be non-negative integers. Suppose that $U_{1}, U_{2}$ and $U_{3}$ are subspaces of $V$. If $\dim(U_{3}\cap U_{1})\geq x_{1}$, $\dim(U_{3}\cap U_{2})\geq x_{2}$, $\dim(U_{1}\cap U_{2})\leq x_{3}$ and $x_{1} +x_{2}-x_{3}=\dim(U_{3})$, then $\dim(U_{3}\cap U_{1})= x_{1}$, $\dim(U_{3}\cap U_{2})=x_{2}$, $\dim(U_{1}\cap U_{2})=x_{3}$, $U_{1}\cap U_{2}\subseteq U_{3}$ and $U_{3}=U_{3}\cap U_{1}+U_{3}\cap U_{2}$.
	\end{lem}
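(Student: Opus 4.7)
The plan is to apply the dimension formula $\dim(A+B)=\dim A+\dim B-\dim(A\cap B)$ to the pair $A=U_3\cap U_1$, $B=U_3\cap U_2$ and force every inequality in the hypothesis to be an equality by sandwiching.

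First I would record two trivial containments. Since $U_3\cap U_1$ and $U_3\cap U_2$ both lie in $U_3$, their sum is a subspace of $U_3$, so $\dim\bigl((U_3\cap U_1)+(U_3\cap U_2)\bigr)\le \dim U_3$. On the other hand, $(U_3\cap U_1)\cap(U_3\cap U_2)=U_3\cap U_1\cap U_2\subseteq U_1\cap U_2$, so $\dim\bigl((U_3\cap U_1)\cap(U_3\cap U_2)\bigr)\le \dim(U_1\cap U_2)\le x_3$.

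Combining these with the modular identity gives the chain
\[
\dim U_3 \;\ge\; \dim\bigl((U_3\cap U_1)+(U_3\cap U_2)\bigr)
\;=\; \dim(U_3\cap U_1)+\dim(U_3\cap U_2)-\dim(U_3\cap U_1\cap U_2)
\;\ge\; x_1+x_2-x_3 \;=\;\dim U_3,
\]
so every inequality must actually be an equality. This immediately yields $\dim(U_3\cap U_1)=x_1$, $\dim(U_3\cap U_2)=x_2$, $\dim(U_3\cap U_1\cap U_2)=x_3$, and $U_3=(U_3\cap U_1)+(U_3\cap U_2)$. Finally, since $U_3\cap U_1\cap U_2\subseteq U_1\cap U_2$ and the former has dimension $x_3\ge \dim(U_1\cap U_2)$, these two subspaces coincide; in particular $U_1\cap U_2\subseteq U_3$ and $\dim(U_1\cap U_2)=x_3$.

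There is no genuine obstacle here: the statement is a pure linear-algebra identity that falls out of the dimension formula once one notices that $\dim U_3$ appears on both ends of the inequality chain. The only thing that needs care is reading off the five conclusions in the correct order, namely extracting the three dimension equalities from the squeeze first, then deducing the containment $U_1\cap U_2\subseteq U_3$, and finally the direct-sum-like decomposition $U_3=U_3\cap U_1+U_3\cap U_2$.
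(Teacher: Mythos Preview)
Your proof is correct and follows essentially the same route as the paper: both apply the dimension formula to the pair $U_3\cap U_1$, $U_3\cap U_2$, obtain the same sandwich $\dim U_3\ge\cdots\ge x_1+x_2-x_3=\dim U_3$, and then read off all five conclusions from the forced equalities. Your write-up is in fact slightly more explicit than the paper's in spelling out why $U_1\cap U_2=U_3\cap U_1\cap U_2$ (hence $U_1\cap U_2\subseteq U_3$), which the paper leaves to the reader.
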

	\begin{proof}
		By dimension formula for vector spaces, we have 
		\begin{equation*}
			\begin{aligned}
				\dim(U_{3})&\geq \dim(U_{3}\cap U_{1}+U_{3}\cap U_{2})\\
				&=\dim(U_{3}\cap U_{1})+\dim(U_{3}\cap U_{2})-\dim(U_{3}\cap U_{1}\cap U_{2})\\
				&\geq 	\dim(U_{3}\cap U_{1})+\dim(U_{3}\cap U_{2})-\dim( U_{1}\cap U_{2})\\
				&\geq x_{1}+x_{2}-x_{3}\\
				&=\dim(U_{3}).	 
			\end{aligned}
		\end{equation*}
		It follows that three inequalities above are all equalities. 
		Then the desired results hold.
	\end{proof}

	\begin{lem}\label{2407022}
		Let $n$, $k$, $t$ and $s$ be positive integers with $k\geq t+1$ and $n\geq 2k$. Suppose  $\mathcal{F}\subseteq {V\brack k}$ is an $s$-almost $t$-intersecting family with $\tau_{t}(\mf)<k$, and has the unique  $\tau_{t}(\mf)$-dimensional $t$-cover $T$. If $A$ is a subspace of $V$ such that $\dim(A)<\tau_{t}(\mathcal{F})$ and $\dim(A\cap T)\geq t$, then 
		\begin{equation*}\label{2407023}
			\left| \left( \mf\bs \mf_{T}\right)_{A}\right|\leq {k-t+1\b 1}^{\tau_{t}(\mf)-\dim(A)+1}{n-\tau_{t}(\mf)-1\b k-\tau_{t}(\mf)-1}+s\sum_{i=0}^{\tau_{t}(\mf)-\dim(A)}{k-t+1\b 1}^{i}. 
		\end{equation*}
	\end{lem}
	\begin{proof}
		Let $\mb=\mf\bs \mf_{T}$. Observe that $A$ is a $t$-cover of $\mf \bs \mb$. By Proposition \ref{4}, there is
		a subspace $A_{u}$ of $V$ such that $A\subseteq A_{u}$, $\tau_{t}(\mf)\leq \dim(A_{u})$ and 
		\begin{equation}\label{2406155}
			\left| (\mf\bs \mf_{T})_{A}\right|\leq {k-t+1\b 1}^{\dim(A_{u})-\dim(A)}\left|(\mf\bs \mf_{T})_{A_{u}}\right|+s\sum_{i=0}^{\tau_{t}(\mf)-\dim(A)-1}{k-t+1\b 1}^{i}.
		\end{equation}
		If $A_{u}=T$, then $\left| (\mf\bs \mf_{T})_{A_{u}}\right|=0$, which together with (\ref{2406155}) implies the desired result. 
		In the following, assume that $A_{u}\neq T$.
		
		Suppose $\dim(A_{u})\geq \tau_{t}(\mf)+1$. By Lemma \ref{1} (v), we have 
		$${k-t+1\brack 1}^{\dim(A_{u})-\dim(A)}{n-\dim(A_{u})\b k-\dim(A_{u})}\leq {k-t+1\b 1}^{\tau_{t}(\mf)-\dim(A)+1}{n-\tau_{t}(\mf)-1\b k-\tau_{t}(\mf)-1}.$$
		This combining with
		$\left| (\mf\bs \mf_{T})_{A_{u}}\right|\leq {n-\dim(A_{u})\b k-\dim(A_{u})}$ and (\ref{2406155}) implies the desired result.

		Suppose $\dim(A_{u})=\tau_{t}(\mf)$. Then $A_{u}$ is not a $t$-cover of $\mf$ since $A_{u}\neq T$. Note that $\dim(A_{u}\cap T)\geq t$. 
		We further conclude  $\dim(A_{u}\cap F_{u})<t$ for some $F_{u}\in \mf\bs \mf_{T}$. By Lemma \ref{1} (v) and Lemma \ref{2404224}, there exists a subspace $A_{u+1}$ such that $\dim(A_{u})< \dim(A_{u+1})$ and 
		\begin{equation*}
			\begin{aligned}
				\left| (\mf\bs \mf_{T})_{A_{u}}\right|\leq &{k-t+1\b 1}^{\dim(A_{u+1})-\dim(A_{u})}\left| (\mf\bs \mf_{T})_{A_{u+1}}\right|+s\\
				\leq & {k-t+1\b 1}^{\tau_{t}(\mf)-\dim(A_{u})+1}{n-\tau_{t}(\mf)-1\b k-\tau_{t}(\mf)-1}+s.
			\end{aligned}
		\end{equation*}
		The desired result follows from (\ref{2406155}).
	\end{proof}
	
	\begin{lem}\label{2405122}
		Let $n$, $k$, $t$ and $s$ be positive integers with $k\geq t+1$ and $n\geq 2k+s-1$. Suppose that $\mf\subseteq {V\b k}$ is a maximal $s$-almost $t$-intersecting family with $\tau_{t}(\mf)\leq k$. If $\mt$ is the set of all $t$-covers of $\mf$ with dimension $\tau_{t}(\mf)$, then $\mt$ is $t$-intersecting.
	\end{lem}
	\begin{proof}
		It is sufficient  to show $\dim(T_{1}\cap T_{2})\geq t$ for any $T_{1}, T_{2}\in \mt$. 
		If $\tau_{t}(\mf)=k$, then $T_{2}\in \mt\subseteq \mf$ by the maximality of $\mf$. Since $T_{1}$ is a $t$-cover of $\mf$, we have $\dim(T_{1}\cap T_{2})\geq t$, as desired. 
		In the following,  assume that $\tau_{t}(\mf)<k$.

		Suppose for contradiction that $\dim(T_{1}\cap T_{2})\leq t-1$.   
		Then by Lemma \ref{2404234}, there exists a  $k$-subspace $W_{1}$ such that $W_{1}\cap (T_{1}+T_{2})=T_{1}$. 
		Set
		$$\mathcal{W}=\left\{ W\in {V\brack k}: W\cap (W_{1}+T_{2})=T_{2}\right\}.$$
		Since $t\leq \tau_{t}(\mf)\leq \dim(W_{1}+T_{2})$ and $0\leq k-\tau_{t}(\mf)\leq n-\dim(W_{1}+T_{2})$, we get
		\begin{equation*}
			\begin{aligned}
				\left| \mathcal{W}  \right|= &\ q^{(\dim(W_{1}+T_{2})-\tau_{t}(\mf))(k-\tau_{t}(\mf))}{n-\dim(W_{1}+T_{2})\brack k-\tau_{t}(\mf)}\\
				\geq &\ q^{(k-\tau_{t}(\mf))^{2}}\cdot q^{(k-\tau_{t}(\mf))(n-\dim(W_{1}+T_{2})-k+\tau_{t}(\mf))}\\
				\geq &\ q\cdot q^{s-1}> s
			\end{aligned}
		\end{equation*}
		from Lemma \ref{2404234}.	For each $W\in\mathcal{W}$, we have
		\begin{equation*}
			\begin{aligned}
				W\cap W_{1}=W\cap W_{1}\cap (W_{1}+T_{2})
				=T_{2}\cap W_{1}=(T_{1}+T_{2})\cap T_{2}\cap W_{1}
				=T_{1}\cap T_{2},
			\end{aligned}
		\end{equation*}
		which implies $\dim(W\cap W_{1})\leq t-1$.	Since $\mf$ is $s$-almost $t$-intersecting, we know $\mw\nsubseteq\mf$ or $W_{1}\notin \mf$. 
		On the other hand, since $\mf$ is maximal, we have
		$\mw\cup\lbrace W_{1}\rbrace\subseteq \mf$, a contradiction.  This finishes the proof.
	\end{proof}
	
	\begin{lem}\label{2409121}
		Let $n$, $k$ and $t$ be positive integers with $k\geq t+1$ and $n\geq 2k-t+1$. Assume that $\mt\subseteq {V\b t+1}$ is $t$-intersecting and its size is at least 2. Suppose that  $A$ and $B$ are $k$-subspaces of $V$ with $\dim(A\cap B)<t$. If $\dim(A\cap T)\geq t$ and $\dim(B\cap T)\geq t$ for any $T\in \mt$, then $\bigcap_{T\in \mt}T\in {A\b t}\cup {B\b t}$.
	\end{lem}
	\begin{proof}
		Since $\left|\mt\right|\geq 2$, we have $\dim(\bigcap_{T\in \mt}T)\leq t$. It is sufficient to prove that  $\bigcap_{T\in \mt}T$ contains a $t$-subspace in ${A\b t}\cup {B\b t}$. 
		
		Choose $T_{1},T_{2}\in\mt$. We  first show  
		\begin{equation}\label{add1}
			T_{1}\cap A=T_{2}\cap A\quad \textnormal{or}\quad T_{1}\cap B=T_{2}\cap B.
		\end{equation}
		Suppose for contradiction that $T_{1}\cap A\neq T_{2}\cap A$ and $T_{1}\cap B\neq T_{2}\cap B$. Recall that $\mt$ is $t$-intersecting. Then $\dim(T_{1}+T_{2})=t+2$. Since  $\dim(A\cap T)\geq t$ and $\dim(B\cap T)\geq t$ for any $T\in \mt$, we have 
		\begin{equation*}
			\dim(T_{1}\cap A+T_{2}\cap A)\geq t+1\quad \textnormal{and}\quad\dim(T_{1}\cap B+T_{2}\cap B)\geq t+1.
		\end{equation*}
		Therefore, we obtain 
		\begin{equation*}
			\begin{aligned}
				t+2=&\dim(T_{1}+T_{2}) \\
				\geq & \dim((T_{1}+T_{2})\cap A+(T_{1}+T_{2})\cap B)\\
				\geq & \dim(T_{1}\cap A+T_{2}\cap A)+\dim(T_{1}\cap B+T_{2}\cap B)-\dim(A\cap B)\\
				\geq &\ t+3,
			\end{aligned}
		\end{equation*}
		a contradiction. Thus \eqref{add1} holds.
		
		Let $W_{1}$ and $W_{2}$ be distinct members of $\mt$. By \eqref{add1}, w.l.o.g., assume that $W_{1}\cap A=W_{2}\cap A:=E$. It follows that $\dim(E)=t$ from Lemma \ref{2405283}. Next we show $E=T\cap A$ for any $T\in \mt$. Otherwise, \eqref{add1} implies $T\cap B=W_{1}\cap B$ and $T\cap B=W_{2}\cap B$. From Lemma \ref{2405283}, we  know 
		$$W_{1}=W_{1}\cap A+W_{1}\cap B=W_{2}\cap A+W_{2}\cap B=W_{2},$$ a contradiction to the assumption that $W_{1}\neq W_{2}$. This yields $E\subseteq \bigcap_{T\in \mt}T$, as desired.
	\end{proof}
	
	\begin{pr1}\label{2407081}
		Let $n$, $k$, $t$ and $s$ be positive integers with $(t,s)\neq (1,1)$, $k\geq t+2$ and $n\geq 2k+t+s-1+\delta_{2,q}$. Suppose $\mf\subseteq {V\b k}$ is a maximal $s$-almost $t$-intersecting family which is not $t$-intersecting. If $\tau_{t}(\mf)\geq t+2$, then $\left|\mf\right|<h(n,k,t,s)$.
	\end{pr1}

	The proof of Proposition \ref{2407081} is tedious. Thus we only present this proposition here, and prove it in Section \ref{2407065}.

	\begin{proof}[\bf Proof of Theorem \ref{2407163}]
	Recall that the family defined in Construction \ref{2406191} is $s$-almost $t$-intersecting but not $t$-intersecting. Then
	\begin{equation}\label{2406152}
		\left|\mf\right|\geq h(n,k,t,s).
	\end{equation}
	Observe that there exist $A,B\in \mf$ such that
	$\dim(A\cap B)<t$. Hence $\tau_{t}(\mf)\geq t+1$. 
	
   	We say that $\mf$ is a maximal $s$-almost $t$-intersecting family. Otherwise, there exists $D\in{V\b k}\bs \mf$ such that $\mf\cup\left\{D\right\}$ is $s$-almost $t$-intersecting. Since $\left\{A, B\right\}\subseteq \mf\subseteq \mf\cup\left\{D\right\}$, we further derive that $\mf\cup\left\{D\right\}$ is an $s$-almost $t$-intersecting family which is not $t$-intersecting. This contradicts to the assumption that $\mf$ has maximum size. 	If $\tau_t(\mf)\geq t+2$, then by   Proposition \ref{2407081}, we have 
	$|\mf|<h(n,k,t,s)$. This contradicts (\ref{2406152}). Therefore $\tau_{t}(\mf)=t+1$.
	Let  $\mt$ denote the set of all $t$-covers of $\mf$  with dimension $t+1$.  
	\begin{cl}\label{2407174}
		$\left|\mt\right|\geq 2$.
	\end{cl}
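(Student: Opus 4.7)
The plan is to argue by contradiction: suppose $\mt=\{T\}$ is a singleton with unique element $T\in{V\b t+1}$, and then push the upper bound on $|\mf|$ strictly below $h(n,k,t)$, contradicting (\ref{2406152}).

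First I would partition $\mf$ according to $F\cap T$. Since $T$ is a $t$-cover with $\dim(T)=t+1$, every $F\in\mf$ satisfies $\dim(F\cap T)\in\{t,t+1\}$: those with $\dim(F\cap T)=t+1$ are exactly $\mf_{T}$, while each $F\in\mf\bs\mf_{T}$ determines a unique $t$-subspace $H=F\cap T$ of $T$. Hence $\mf\bs\mf_{T}$ is the disjoint union of the $(\mf\bs\mf_{T})_{H}$ over $H\in{T\b t}$, and trivially $|\mf_{T}|\leq{n-t-1\b k-t-1}$.

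The uniqueness of $T$ is exactly what is needed to apply Lemma \ref{2407022} with $A=H$: we have $\dim(H)=t<t+1=\tau_{t}(\mf)$ and $\dim(H\cap T)=t$, so
\[
|(\mf\bs\mf_{T})_{H}|\leq{k-t+1\b 1}^{2}{n-t-2\b k-t-2}+{k-t+1\b 1}+1.
\]
Summing over the ${t+1\b 1}$ choices of $H$ and adding the bound for $|\mf_{T}|$ yields an explicit upper bound on $|\mf|$ whose dominant term is ${t+1\b 1}{k-t+1\b 1}^{2}{n-t-2\b k-t-2}$.

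The remaining step is numerical: I would compare this upper bound with $\tfrac{47}{48}{k-t+1\b 1}{n-t-1\b k-t-1}$, which by Lemma \ref{2405262} is strictly below $h(n,k,t)$, in order to contradict (\ref{2406152}). Using Lemma \ref{1}(i) to replace the ratio ${n-t-1\b k-t-1}/{n-t-2\b k-t-2}$ by a quantity exceeding $q^{n-k}$, together with the bounds ${t+1\b 1}<q^{t+1}$ and ${k-t+1\b 1}<q^{k-t+1}$, the ratio of the dominant term to ${k-t+1\b 1}{n-t-1\b k-t-1}$ is at most $q^{2k+2-n}$. Since $n\geq 2k+t+\delta_{2,q}$ and $t\geq 2$, this ratio is well below one, and the additive lower-order contributions are even smaller, so the total sits comfortably below $\tfrac{47}{48}$. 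The main (though routine) obstacle is keeping the two regimes $q=2$ and $q\geq 3$ under a single chain of inequalities; this is exactly the kind of bookkeeping already handled in Section \ref{2407062}.
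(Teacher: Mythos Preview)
Your proposal is correct and follows essentially the same argument as the paper: both assume $\mt=\{T\}$, decompose $\mf$ as $\mf_{T}\cup\bigcup_{H\in{T\b t}}(\mf\bs\mf_{T})_{H}$, bound each piece via Lemma~\ref{2407022} and the trivial bound $|\mf_{T}|\leq{n-t-1\b k-t-1}$, and then compare the resulting sum with $\tfrac{47}{48}{k-t+1\b 1}{n-t-1\b k-t-1}$ using Lemma~\ref{2405262} to contradict (\ref{2406152}). The paper carries out the final numerical estimate in two separate chains for $q\geq 3$ and $q=2$, exactly as you anticipate.
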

	\noindent \textit{Proof.} 
	Suppose for contradiction that $\mt=\lbrace T \rbrace$. Then
	$\mf=\mf_{T}\cup \left( \bigcup_{H\in {T\b t}}\left(\mf \bs \mf_{T}\right)_{H} \right)$.
	From Lemma \ref{2407022}, we obtain 
	$$\left|\mf\right|\leq {n-t-1\b k-t-1}+{t+1\b 1}{k-t+1\b 1}^{2}{n-t-2\b k-t-2}+s{t+1\b 1}{k-t+1\b 1}+s{t+1\b 1}.$$
		By Lemma \ref{1}, we have 
	\begin{equation*}\label{2405303}
		\begin{aligned}
			\frac{\left|\mf\right|}{{k-t+1\b 1}{n-t-1\b k-t-1}}\leq &\  \frac{1}{{k-t+1\b 1}}+\frac{{t+1\b 1}{k-t+1\b 1}(q^{k-t-1}-1)}{q^{n-t-1}-1}+\frac{s{t+1\b1 }}{{n-t-1\b k-t-1}}+\frac{s{t+1\b 1}}{{k-t+1\b 1}{n-t-1\b k-t-1}}\\
			\leq &\  \frac{1}{13}+\frac{9}{4q^{n-2k}}+\frac{3s}{2q^{n-k-t}}+\frac{3s}{26q^{n-k-t}}\leq \frac{181}{468}<\frac{23}{24}
		\end{aligned}
	\end{equation*}
	for $q\geq 3$, and 
	\begin{equation*}\label{2405304}
		\begin{aligned}
			\frac{\left|\mf\right|}{{k-t+1\b 1}{n-t-1\b k-t-1}}\leq &\ \frac{1}{7}+\frac{(q^{t+1}-1)(q^{k-t+1}-1)(q^{k-t-1}-1)}{q^{n-t-1}-1}+\frac{s(q^{t+1}-1)}{{n-t-1\b k-t-1}}+\frac{s(q^{t+1}-1)}{7{n-t-1\b k-t-1}}\\
			\leq &\ \frac{1}{7}+\frac{1}{q^{n-2k-2}}+\frac{s}{q^{n-k-t-1}}+\frac{s}{7q^{n-k-t-1}}\leq \frac{11}{14}<\frac{23}{24} 
		\end{aligned}
	\end{equation*}
	for $q=2$. By Lemma \ref{2405262}, we derive $\left| \mf\right|<h(n,k,t,s)$, which contradicts (\ref{2406152}).
	$\hfill\square$

    Let $E=\bigcap_{T\in \mt}T$ and $M$ be the space spanned by the union of all members of $\mt$. 	Notice that $\mf$ is maximal. By Lemma \ref{2405122}, we know $\mt$ is $t$-intersecting. This combining with Lemma \ref{2409121} yields $E\in {A\b t}\cup {B\b t}$. W.l.o.g., assume that $E\in {A\b t}$. 
    
    \begin{cl}\label{2407061} 
    	The following hold.
    	\begin{enumerate}[\normalfont(i)]
    		\item  $M\subseteq E+F\in {V\b k+1}$ for each $F\in \mathcal{F}\backslash \mathcal{F}_{E}$.
    		\item  $A\cap M=E$.
    	\end{enumerate}
    \end{cl}	
	\noindent \textit{Proof.} 
	(i) For any $T\in \mathcal{T}$ and $F\in \mf\bs \mf_{E}$, since $E\subseteq T$ and $\dim(T\cap F)\geq t$, we have $\dim(E\cap F)=t-1$ and $\dim(T\cap (E+F))\geq t+1$, which   imply that $\dim(E+F)=k+1$ and $T\subseteq E+F$. Hence $M\subseteq E+F\in {V\brack k+1}$. 
	
	(ii) It is easy to check $E\subseteq A\cap M$. In order to prove $A\cap M=E$, we just need to show $\dim(A\cap M)=t$. Suppose for contradiction that $\dim(A\cap M)\geq t+1$. Observe that $B\in \mf \bs \mf_{E}$. From (i), we obtain $M\subseteq E+B$ and $\dim(E+B)=k+1$. Then 
	\begin{equation*}
		\begin{aligned}
			\dim(A\cap B)= &\dim(A\cap (E+B)\cap B)\\
			\geq & \dim(A\cap (E+B))+\dim(B)-\dim(E+B)\\
			\geq &\ t,
		\end{aligned}
	\end{equation*} 
	a contradiction. This completes the proof.
	$\hfill\square$
	
	By Claims \ref{2407174} and \ref{2407061}, we have $t+2\leq \dim(M)\leq k+1$. 
	Next  we show a precise result.
	
	\begin{cl}\label{2407175}
		$\dim(M)=k+1$.
	\end{cl}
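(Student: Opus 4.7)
The plan is to argue by contradiction: assume $\dim(M)\le k$ and set $m=\dim(M)$, so $t+2\le m\le k$ by Claim \ref{2407061}. The strategy is to push the lower bound $|\mf|\ge h(n,k,t)$ from \eqref{2406152} into conflict with upper bounds derived from the extra structure imposed by $\dim(M)\le k$.

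First I would use maximality to show that every $k$-subspace $W$ with $M\subseteq W$ lies in $\mf$. For such a $W$ and any $F\in\mf$, $\dim(W\cap F)\ge t$ is immediate: when $F\in\mf_E$ one has $E\subseteq W\cap F$; when $F\in\mf\bs\mf_E$, Claim \ref{2407061}(ii) yields $\dim(F\cap M)=m-1\ge t+1$, and $M\subseteq W$ then gives $\dim(W\cap F)\ge m-1\ge t+1$. Thus $W$ has no bad partner in $\mf$, and maximality of $\mf$ forces $W\in\mf$, so $|\mf_M|={n-m\brack k-m}$.

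Next, since $\dim(E+U_2)=k+1>m$, I pick $v\in(E+U_2)\bs M$ and set $T^*=E+\la v\ra$. Writing $v=e+u$ with $e\in E$ and $u\in U_2\bs(E\cap U_2)$ shows $u\in T^*\cap U_2$ and $u\notin E$, so $\dim(T^*\cap U_2)=t$, while $E\subseteq T^*$ gives $\dim(T^*\cap F)\ge t$ for every $F\in\mf_E$. If $T^*$ were a $t$-cover of $\mf$, it would lie in $\mt$ and satisfy $T^*\subseteq M$, contradicting $v\notin M$. Hence some $F^*\in\mf\bs\mf_E\bs\{U_2\}$ witnesses $\dim(T^*\cap F^*)<t$, and $v\notin F^*+E$ forces $F^*+E\ne U_2+E$.

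Finally, I would combine two constraints on $\mf_E$ to exceed what $\mf$ can accommodate. Because $U_2$'s unique bad partner is $U_1$, every $F\in\mf_E\bs\{U_1\}$ satisfies $\dim(F\cap U_2)\ge t$, so $\mf_E\bs\{U_1\}$ embeds into a set of size $h(n,k,t)-2$. An analogous constraint involving $F^*$ (which has at most one bad partner) traps $\mf_E$ minus at most one further element in another set of the same size. Since $U_2+E\ne F^*+E$, the two ``defect'' sets $\{F\in{V\brack k}:E\subseteq F,\,\dim(F\cap U_2)=t-1\}$ and $\{F\in{V\brack k}:E\subseteq F,\,\dim(F\cap F^*)=t-1\}$, each of size $q^{(k-t+1)(k-t)}{n-k-1\brack k-t}$ by Lemma \ref{2404234}, have largely disjoint structure; hence their union is close to twice that size, shrinking the common ``good'' intersection below $h(n,k,t)-2$ by a positive defect. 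Coupled with an upper bound on $|\mf\bs\mf_E|$ extracted from $\dim(F\cap M)=m-1$ and $\dim(F\cap E)=t-1$ via Lemma \ref{2404234} and the arithmetic estimates of Section \ref{2407062}, this should force $|\mf|<h(n,k,t)$, contradicting \eqref{2406152}. The main obstacle will be quantifying the defect precisely and showing that it dominates the contribution of $|\mf\bs\mf_E|$ under the hypothesis $n\ge 2k+t+\delta_{2,q}+\delta_{k,t+1}(1-2t-\delta_{2,q})$.
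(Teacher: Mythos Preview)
Your first two steps are correct and well-motivated, but the third step contains a genuine gap that you yourself flag as ``the main obstacle,'' and it is more serious than you suggest.

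The crux of your plan is the claim that, because $E+U_{2}\ne E+F^{*}$, the two defect sets $D_{1}=\{F:E\subseteq F,\ \dim(F\cap U_{2})=t-1\}$ and $D_{2}=\{F:E\subseteq F,\ \dim(F\cap F^{*})=t-1\}$ have ``largely disjoint structure.'' But all you know is that $E+U_{2}$ and $E+F^{*}$ are distinct $(k+1)$-spaces; their intersection can have dimension anywhere in $[m,k]$, and nothing in your argument controls this. Writing $D_{i}=\{F:E\subseteq F,\ F\cap(E+X_{i})=E\}$, one sees that when $\dim\bigl((E+U_{2})\cap(E+F^{*})\bigr)=k$ the overlap $|D_{1}\cap D_{2}|$ can be a large fraction of $|D_{1}|$, so $|D_{2}\setminus D_{1}|$ need not dominate your upper bound for $|\mf\setminus\mf_{E}|$ (which, via Lemma~\ref{2404234}, is of order $q^{m-t+1}{t\brack 1}{k-t\brack 1}{n-m\brack k-m}$ and is large when $m$ is close to $t+2$). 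Without a quantitative lower bound on $|D_{2}\setminus D_{1}|$ that works uniformly in the unknown position of $F^{*}$, the contradiction does not close. Separately, your Step~1 (that $\mf_{M}={V\brack k}_{\supseteq M}$) is correct but never used downstream.

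The paper takes a different and more direct route. Instead of introducing a second witness $F^{*}$, it decomposes $\mf'_{E}$ according to whether the $(t{+}1)$-space $H$ with $E\subseteq H\subseteq E+U_{2}$ lies in $M$ or not. If $H\subseteq M$ one uses the trivial bound $|\mf'_{H}|\le{n-t-1\brack k-t-1}$; if $H\not\subseteq M$ then $H\notin\mt$, so $H$ fails to be a $t$-cover and Lemma~\ref{2404224} gives the sharper bound $|\mf_{H}|\le{k-t+1\brack 1}{n-t-2\brack k-t-2}+1$. Combining with a count of $|\mf'\setminus\mf'_{E}|$ (using $\dim(F'\cap M)=\ell-1$ and $U_{1}\cap M=E$) yields $|\mf|\le g(n,k,t,\ell)$, and the pre-established monotonicity Lemma~\ref{2407176} reduces to $\ell=k$, after which a single numerical inequality and Lemma~\ref{2405262} give $|\mf|<h(n,k,t)$. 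This avoids entirely the need to compare two uncontrolled defect sets.
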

	\noindent \textit{Proof.} 
	Suppose for contradiction that $t+2\leq \dim(M)\leq k$. Denote $z=\dim(M)$ and $$\mf^{\prime}=\left\{ F\in\mf:\dim(F\cap A)\geq t,\ \dim(F\cap B)\geq t\right\}.$$

	We first show an upper bound on $\left|\mf^{\prime}_{E}\right|$. For any $F\in \mathcal{F}^{\prime}_{E}$, we have $\dim(F\cap (E+B))\geq t+1$ by $E\subseteq F$ and $\dim(F\cap B)\geq t$. Hence 
	\begin{equation}\label{2404251}
		\mathcal{F}^{\prime}_{E}=\left( \bigcup _{H_{1}\in {M\brack t+1},\ E\subseteq H_{1}}\mathcal{F}^{\prime}_{H_{1}}\right)\cup \left( \bigcup_{H_{2}\in {E+B\brack t+1}\backslash {M\brack t+1},\ E\subseteq H_{2}}    \mathcal{F}^{\prime}_{H_{2}}  \right). 
	\end{equation}
By Lemma \ref{2404234}, we have
$\left| \left\{ H_{1}\in {M\brack t+1}: E\subseteq H_{1}\right\} \right|={z-t\b 1}$ and  $\left| \mathcal{F}^{\prime}_{H_{1}} \right|\leq {n-t-1\brack k-t-1}$ for any $H_{1}\in {M\brack t+1}$,  
which imply
\begin{equation}
	\label{2404252}	\left| \bigcup _{H_{1}\in {M\brack t+1},\ E\subseteq H_{1}}\mathcal{F}^{\prime}_{H_{1}}\right|\leq {z-t\brack 1}{n-t-1\brack k-t-1}.
\end{equation}
Pick $H_{2}\in {E+B\brack t+1}\backslash {M\brack t+1}$ with $E\subseteq H_{2}$. 
Since $H_{2}$ is not a $t$-cover of $\mf$, then   $\dim(X\cap H_{2})<t$ for some  $X\in \mf$. By Lemma \ref{2404224}, there exits a $(2t+1-\dim(X\cap H_{2}))$-subspace $R$ such that  $\left|\mf_{H_{2}}\right|\leq {k-t+1\b1}^{t-\dim(X\cap H_{2})}\left|\mf_{R}\right|+s$. From Lemma \ref{1} (v), we obtain $$\left| \mathcal{F}^{\prime}_{H_{2}}\right|\leq \left|  \mathcal{F}_{H_{2}}\right|\leq {k-t+1\brack 1}{n-t-2\brack k-t-2}+s.$$
It follows that $B\in \mf\bs \mf_{E}$ from $E\in {A\b t}$ and $\dim(A\cap B)<t$.  By Claim \ref{2407061} (i), we know $E\subseteq M\subseteq E+B\in {V\b k+1}$. This combining with Lemma \ref{2404234} yields 
$$\left| \left\{  H_{2}\in {E+B\brack t+1}\backslash {M\brack t+1}: E\subseteq H_{2} \right\}\right|={k-t+1\brack 1}-{z-t\brack 1}=q^{z-t}{k-z+1\brack 1}.$$
Therefore,  we have
\begin{equation}\label{2405311}
	\left|\bigcup_{ H_{2}\in {E+B\brack t+1}\backslash {M\brack t+1},\ E\subseteq H_{2}}    \mathcal{F}^{\prime}_{H_{2}}  \right|\leq q^{z-t}{k-z+1\brack 1}{k-t+1\brack 1}{n-t-2\brack k-t-2}+s q^{z-t}{k-z+1\brack 1}.
\end{equation}

	Next we give an upper bound on $\left| \mathcal{F}^{\prime}\backslash \mathcal{F}^{\prime}_{E}\right|$. 
	For any $F\in \mathcal{F}^{\prime}\backslash \mathcal{F}^{\prime}_{E}\subseteq \mathcal{F}\backslash \mathcal{F}_{E}$, by Claim \ref{2407061} (i), we have $E+F\subseteq M+F\subseteq E+F$ and $\dim(E+F)=k+1$, which imply $\dim(F\cap M)=z-1$. Note  that $A\cap M=E$ from Claim \ref{2407061} (ii). Then
	\begin{equation*}
		\begin{aligned}
			\dim(F\cap (A+M))\geq &\dim(F\cap A)+\dim(F\cap M)-\dim(F\cap A\cap M)\\
			\geq&\ t+z-1-\dim(F\cap E)\\
			\geq& \ z.
		\end{aligned}
	\end{equation*}
	Hence, we know 
	\begin{equation*}
		\mathcal{F}^{\prime}\backslash \mathcal{F}^{\prime}_{E}\subseteq \bigcup_{H_{3}\in {A+M\brack z},\ E \nsubseteq H_{3},\ \dim(H_{3}\cap M)=z-1} \mathcal{F}^{\prime}_{H_{3}}.
	\end{equation*}
	It follows 	from Lemma \ref{2404234} that 
	$$\left| \left\{  H_{3}\in {A+M\brack z}:  E \nsubseteq H_{3},\ \dim(H_{3}\cap M)=z-1 \right\}\right|=q^{z -t+1}{t\brack 1}{k-t\brack 1}.$$
	Therefore, we derive
	\begin{equation}
		\label{2404254}
		\left| \mathcal{F}^{\prime}\backslash \mathcal{F}^{\prime}_{E}\right|\leq q^{z -t+1}{t\brack 1}{k-t\brack 1}{n-z \brack k-z}.
	\end{equation}

	Note that $\mathcal{F}=\mathcal{F}^{\prime}_{E}\cup (\mathcal{F}^{\prime}\backslash \mathcal{F}^{\prime}_{E})\cup \md_{\mf}(A;t)\cup \md_{\mf}(B;t)$. Then $\left|\mf\right|\leq  g(n,k,t,s, z)$ from (\ref{2404251})--(\ref{2404254}), where $g(n,k,t,s,x)$ is defined in (\ref{2407186}). By Lemma \ref{2407176}, we further conclude 
	\begin{equation}\label{2407177}
		\left|\mf\right|\leq g(n,k,t,s,k).
	\end{equation}	
	From Lemma \ref{1},  we obtain 
	\begin{equation*}
		\begin{aligned}
			\frac{g(n,k,t,s, k)}{{k-t+1\b 1}{n-t-1\b k-t-1}}
			=&\ \frac{{k-t\b 1}}{{k-t+1\b 1}}+\frac{q^{k-t}(q^{k-t-1}-1)}{q^{n-t-1}-1}+\frac{s q^{k-t}+q^{k-t+1}{t\b 1}{k-t\b 1}+2s}{{k-t+1\b 1}{n-t-1\b k-t-1}}\\
			\leq&\ \frac{1}{q}+\frac{1}{q^{n-2k+t}}+\frac{s}{q^{n-k}}+\frac{1}{q^{n-2k-1}}+\frac{2s}{q^{n-t}}\\
			\leq&\ \max\left\{ \frac{524}{729}, \frac{55}{64}\right\} <\frac{23}{24}.
		\end{aligned}
	\end{equation*}
	This together with  Lemma \ref{2405262} and (\ref{2407177}) yields $\left|\mf\right|<h(n,k,t)$, a contradiction to (\ref{2406152}). The desired result holds. $\hfill\square$

For convenience, write 	
$$\mh(M,E)=\left\{F\in{V\b k}: E\subseteq F,\ \dim(F\cap M)\geq t+1\right\}.$$
Let $\mf=\mf_{1}\sqcup \mf_{2} \sqcup \left(\mf\bs \mf_{E}\right)$, where
$$\mf_{1}=\left\{F\in \mf: E\subseteq F,\ \dim(F\cap B)\geq t \right\},$$ $$\mf_{2}=\left\{F\in \mf: E\subseteq F,\ \dim(F\cap B)<t\right\}.$$

Recall that $E\in {A\b t}$ and $\dim(A\cap B)<t$. Then $B\in \mf\bs \mf_{E}$. By Claim \ref{2407061} (i) and Claim \ref{2407175}, we have $M=E+B$. Hence
$$\dim(F_{1}\cap M)=\dim(F_{1}\cap (E+B))\geq t+1$$ 
for any $F_{1}\in\mf_{1}$, which implies $\mf_{1}\subseteq \mh(M,E)$. 

Since $M=E+B$, for any $F_{2}\in \mf_{2}$, we have 
\begin{equation*}
	\begin{aligned}
		\dim(F_{2}\cap M)=&\dim(F_{2})+\dim(M)-\dim(F_{2}+M)\\
		             =&\ 2k+1-\dim(F_{2}+B)\\
		             \leq &\ t,		
	\end{aligned}
\end{equation*}
implying  $F_{2}\cap M=E$. Hence $\mf_{2}\subseteq \mh_{1}(M,E)$. Moreover, we have $\left|\mf_{2}\right|\leq \left|\md_{\mf}(B;t)\right|\leq s$.

Choose $F\in \mf\bs\mf_{E}$, by Claim \ref{2407061} (i) and Claim \ref{2407175}, we know $F\subseteq E+F=M$. Hence $\mf\bs \mf_{E}\subseteq \mh_{2}(M,E)$.   Note that $\dim(F\cap A)=\dim(F\cap M\cap A)=\dim(F\cap E)<t$ due to Claim \ref{2407061} (ii). Then 
\begin{equation}\label{2409141}
	\left|\mf\bs \mf_{E}\right|\leq \left|\md_{\mf}(A;t)\right|\leq s.
\end{equation}Since $\dim(M)=k+1$ and $E\subseteq M$, we get 
$$\mh_{2}(M,E)=\left\{F\in {M\b k}: \dim(F\cap E)=t-1 \right\}.$$
It follows that $\left|\mh_{2}(M,E)\right|=q^{k-t+1}{t\b 1}$ from Lemma \ref{2404234}. This together with (\ref{2409141}) yields $\left|\mf\bs \mf_{E}\right|\leq \min\left\{ s, q^{k-t+1}{t\b 1}\right\}$.

The discussion above combining with (\ref{2406152}) produces 
$$h(n,k,t,s)\leq \left|\mf\right|=\left|\mf_{1}\right|+\left|\mf_{2}\right|+\left|\mf\bs \mf_{E}\right|\leq \left|\mh(M,E)\right|+s +\min\left\{ s, q^{k-t+1}{t\b 1}\right\}=h(n,k,t,s).$$
Hence $\mf_{1}=\mh(M,E)$, $\mf_{2}$ is an $s$-subset of $\mh_{1}(M,E)$ and $\mf\bs \mf_{E}$ is a $\min\left\{ s, q^{k-t+1}{t\b 1}\right\}$-subset of $\mh_{2}(M,E)$. The desired result holds.
	\end{proof}

\section{Proof of Proposition \ref{2407081}}\label{2407065}

To prove Proposition \ref{2407081}, we need the following lemmas.	
	
	\begin{lem}\label{2405232}
	Let $U_{1}, U_{2}, U_{3}, U_{4}$ be subspaces of $V$. If $U_{1}\cap U_{2}\subseteq U_{j}$ and $U_{j}=U_{j}\cap U_{1}+U_{j}\cap U_{2}$ for any $j\in \lbrace 3,4 \rbrace$, then 
	$$U_{3}\cap U_{4}=U_{3}\cap U_{4}\cap U_{1}+U_{3}\cap U_{4}\cap U_{2}.$$
\end{lem}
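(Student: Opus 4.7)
The plan is to prove the identity by double inclusion. The inclusion $U_3\cap U_4\cap U_1 + U_3\cap U_4\cap U_2 \subseteq U_3\cap U_4$ is immediate, since each summand is a subspace of $U_3\cap U_4$. So the content lies in showing the reverse inclusion, which I would tackle by chasing an arbitrary vector $x \in U_3\cap U_4$ through the two given decompositions.

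First I would observe that the hypothesis $U_1\cap U_2\subseteq U_j$ for $j\in\{3,4\}$ gives $U_1\cap U_2\subseteq U_3\cap U_4$. Next, for any $x\in U_3\cap U_4$, use $U_3 = U_3\cap U_1 + U_3\cap U_2$ to write $x = a_3 + b_3$ with $a_3\in U_3\cap U_1$ and $b_3\in U_3\cap U_2$, and likewise use $U_4 = U_4\cap U_1 + U_4\cap U_2$ to write $x = a_4 + b_4$ with $a_4\in U_4\cap U_1$ and $b_4\in U_4\cap U_2$.

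Equating these two expressions yields $a_3 - a_4 = b_4 - b_3$. The left-hand side lies in $U_1$ and the right-hand side lies in $U_2$, so their common value $c$ lies in $U_1\cap U_2$. By the first observation, $c\in U_3\cap U_4$ as well. Now I would rewrite $a_3 = a_4 + c$: since $a_4\in U_4$ and $c\in U_4$, we get $a_3\in U_4$, and combined with $a_3\in U_3\cap U_1$ this gives $a_3\in U_3\cap U_4\cap U_1$. Symmetrically, $b_3 = b_4 - c$ lies in $U_4$, so $b_3\in U_3\cap U_4\cap U_2$. Hence $x = a_3 + b_3\in U_3\cap U_4\cap U_1 + U_3\cap U_4\cap U_2$, completing the reverse inclusion.

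There is no real obstacle here — the argument is a short modular-law style manipulation, and the only thing to be careful about is to exploit the hypothesis $U_1\cap U_2\subseteq U_3\cap U_4$ at exactly the moment when one needs to conclude that the ``error term'' $c$ belongs to both $U_3$ and $U_4$. I do not expect to need the ambient vector-space structure beyond the fact that $U_1\cap U_2$ is absorbed into both $U_3$ and $U_4$.
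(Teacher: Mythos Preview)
Your proof is correct and follows essentially the same approach as the paper: decompose an element of $U_3\cap U_4$ in both $U_3$ and $U_4$, observe that the difference of the $U_1$-components equals the difference of the $U_2$-components and hence lies in $U_1\cap U_2\subseteq U_4$, and conclude that the $U_3$-components already lie in $U_4$. The only cosmetic difference is that you package $U_1\cap U_2\subseteq U_3\cap U_4$ as a preliminary observation, whereas the paper invokes $U_1\cap U_2\subseteq U_4$ directly at the needed step.
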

\begin{proof}
	Choose $u\in U_{3}\cap U_{4}$. 
	Then there exist $u_{3,1}\in U_3\cap U_1$,  $u_{3,2}\in U_3\cap U_2$, $u_{4,1}\in U_4\cap U_1$ and $u_{4,2}\in U_4\cap U_2$ such that $$u=u_{3,1}+u_{3,2}=u_{4,1}+u_{4,2}.$$
	Since $u_{3,1}-u_{4,1}=u_{4,2}-u_{3,2}$, we know $u_{3,1}-u_{4,1}\in U_{1}\cap U_{2}$. 
	This together with $U_{1}\cap U_{2}\subseteq U_{4}\cap U_{1}$ yields $u_{3,1}\in U_{4}\cap U_{1}$. 
	Hence $u_{3,1}\in U_{3}\cap U_{4}\cap U_{1}$. Similarly, we have $u_{3,2}\in U_{3}\cap U_{4}\cap U_{2}$. 
	Therefore, we derive
	$$u\in U_{3}\cap U_{4}\cap U_{1}+U_{3}\cap U_{4}\cap U_{2},$$ 
	implying that LHS $\subseteq$ RHS. It is clear that  LHS $\supseteq$ RHS. 
	The desired result follows.
\end{proof}	
	
	Let $X_{1}$ and $X_{2}$ be subspaces of $V$ and $\mf\subseteq {V\b k}$. Write
	$$\mf(X_{1},X_{2}; t, i)=\left\{F\in \mf: \dim(F\cap X_{1})\geq t,\ \dim(F\cap X_{2})\geq t,\ \dim(F\cap X_{1}\cap  X_{2})=i\right\}.$$ 
	
\begin{lem}\label{2405281}
	Let $n$, $k$, $t$, $s$, $\ell$  be positive integers and $i$, $j$ be non-negative integers with $i\leq \min\lbrace t,j \rbrace$, $2t-i\leq k$, $t+j-i\leq \ell$ and $n\geq 2k$. Suppose that $\mf\subseteq {V\b k}$ is an $s$-almost $t$-intersecting family with $2t-i\leq \tau_{t}(\mf)\leq k$. 
	If $X_{1}, X_{2}\in{V\b\ell}$ with $\dim(X_{1}\cap X_{2})=j$, then
	\begin{equation*}
		\begin{aligned}
			\frac{\left| \mf(X_{1},X_{2};t, i)\right|}{q^{2(j-i)(t-i)}{j\b i}{\ell-j\b t-i}^{2}}	
			\leq {k-t+1\brack 1}^{\tau_{t}(\mathcal{F})+i-2t}{n-\tau_{t}(\mathcal{F})\brack k-\tau_{t}(\mathcal{F})}
			+s\sum_{m=0}^{\tau_{t}(\mathcal{F})+i-2t-1}{k-t+1\brack 1}^{m}.
		\end{aligned}
	\end{equation*} 
\end{lem}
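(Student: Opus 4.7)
The plan is to prove the bound by a double-counting argument over $4$-tuples $(F,A_0,B_1,B_2)$ in which $F\in\mf(X_1,X_2;t,i)$, $A_0\in{X_1\cap X_2\b i}$, $B_1\in{X_1\b t}$, $B_2\in{X_2\b t}$, $A_0\subseteq B_1$, $A_0\subseteq B_2$, $B_1\cap X_2=A_0=B_2\cap X_1$, and $B_1+B_2\subseteq F$. Let $\ms$ denote the set of all such $4$-tuples.

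First I would verify that every $F\in\mf(X_1,X_2;t,i)$ supports at least one admissible $4$-tuple. Setting $A_0:=F\cap X_1\cap X_2$, which has dimension exactly $i$ by the definition of $\mf(X_1,X_2;t,i)$, and choosing any $t$-subspaces $B_1\subseteq F\cap X_1$ and $B_2\subseteq F\cap X_2$ containing $A_0$ (which exist because $\dim(F\cap X_j)\geq t\geq i$), the remaining side conditions are automatic: from $B_1\subseteq F\cap X_1$ one gets $B_1\cap X_2\subseteq F\cap X_1\cap X_2=A_0$, while $A_0\subseteq B_1\cap X_2$ is built in, and symmetrically for $B_2$. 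Hence $|\ms|\geq|\mf(X_1,X_2;t,i)|$.

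Next I would count admissible triples $(A_0,B_1,B_2)$, forgetting $F$. There are ${j\b i}$ choices for $A_0$, and for each of these, Lemma \ref{2404234} applied inside $X_1$ (ambient dimension $\ell$, target $X_1\cap X_2$ of dimension $j$, seed $A_0$ of dimension $i$ with $\dim(A_0\cap(X_1\cap X_2))=i$, and target intersection dimension $i$) produces exactly $q^{(j-i)(t-i)}{\ell-j\b t-i}$ valid $B_1$'s, with an identical count for $B_2$. The product equals $q^{2(j-i)(t-i)}{j\b i}{\ell-j\b t-i}^2$, precisely the denominator on the LHS of the stated inequality.

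The pivotal observation is the dimension identity $\dim(B_1+B_2)=2t-i$: since $B_1\subseteq X_1$, we have $B_1\cap B_2\subseteq B_1\cap X_2=A_0$, which combined with $A_0\subseteq B_1\cap B_2$ forces $B_1\cap B_2=A_0$. Thus $A:=B_1+B_2$ is a $(2t-i)$-subspace, and the number of $F\in\mf(X_1,X_2;t,i)$ containing $A$ is at most $|\mf_A|$. When $2t-i<\tau_t(\mf)$, Corollary \ref{2407166} bounds $|\mf_A|$ by exactly the bracketed expression on the RHS; in the boundary case $2t-i=\tau_t(\mf)$, the trivial bound $|\mf_A|\leq{n-(2t-i)\b k-(2t-i)}$ agrees with that expression at exponent $0$ and empty sum. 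Plugging this into $|\mf(X_1,X_2;t,i)|\leq|\ms|\leq(\text{number of triples})\cdot\max_A|\mf_A|$ and dividing through by the number of triples gives the claim. I anticipate no serious obstacle: the only mildly delicate point is the forced identity $B_1\cap X_2=A_0$ (and its symmetric version), which underlies both the existence of an admissible tuple per $F$ and the dimension calculation $\dim(B_1+B_2)=2t-i$, and both follow immediately from $B_1\subseteq X_1$ together with the definition of $A_0$ as $F\cap X_1\cap X_2$.
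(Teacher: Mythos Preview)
Your proposal is correct and matches the paper's proof in substance: the paper likewise associates to each $F\in\mf(X_1,X_2;t,i)$ a pair $Z_1\in{F\cap X_1\b t}$, $Z_2\in{F\cap X_2\b t}$ with $Z_1\cap X_2=Z_2\cap X_1=F\cap X_1\cap X_2$, observes that $Z_1+Z_2$ has dimension $2t-i$, and bounds $|\mf_{Z_1+Z_2}|$ via Corollary~\ref{2407166} (or trivially when $\tau_t(\mf)=2t-i$), while the count of admissible $(Y,Z_1,Z_2)$ triples via Lemma~\ref{2404234} supplies the denominator. The only cosmetic difference is that the paper packages the argument as a union bound $\mf(X_1,X_2;t,i)\subseteq\bigcup_Y\bigcup_{Z\in\ma(Y)}\mf_Z$ rather than as an explicit double count over $4$-tuples.
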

\begin{proof}
	Let $F\in \mf(X_{1}, X_{2}; t,i)$. By the definition, there exist $Z_{1}\in {F\cap X_{1}\b t}$ and $Z_{2}\in {F\cap X_{2}\b t}$ such that $Z_{1}\cap X_{2}=Z_{2}\cap X_{1}=F\cap X_{1}\cap X_{2}$. Observe that  $Z_{1}\cap Z_{2}=F\cap X_{1}\cap X_{2}$. 
	This together with $\dim(X_{1}+Z_{2})=\ell+t-i$ and $\dim(X_{2}+Z_{1})=\ell+t-i$ yields $$\dim((Z_{1}+Z_{2})\cap X_{1})=\dim((Z_{1}+Z_{2})\cap X_{2})=t.$$ We further get  $(Z_{1}+Z_{2})\cap X_{1}\cap X_{2}=F\cap X_{1}\cap X_{2}$. Then
	\begin{equation}\label{24071713}
		\mf(X_{1},X_{2};t,i)\subseteq \bigcup_{Y\in {X_{1}\cap X_{2}\b i}} \ \bigcup_{Z\in \ma(Y)}\mf_{Z},
	\end{equation}
	where $$\ma(Y)=\left\{Z\in {V\b 2t-i}: \dim(Z\cap X_{1})=\dim(Z\cap X_{2})=t,\ Z\cap X_{1}\cap X_{2}=Y\right\}.$$
	
	Let $Y\in {X_{1}\cap X_{2}\b i}$ and $Z\in\ma(Y)$. We claim that 
	\begin{equation}\label{2407172}
		\left|\mf_{Z}\right|\leq {k-t+1\brack 1}^{\tau_{t}(\mathcal{F})+i-2t}{n-\tau_{t}(\mathcal{F})\brack k-\tau_{t}(\mathcal{F})}
		+s\sum_{m=0}^{\tau_{t}(\mathcal{F})+i-2t-1}{k-t+1\brack 1}^{m}
	\end{equation}
	Recall that $\tau_{t}(\mf)\geq 2t-i$. If $\tau_{t}(\mf)=2t-i$, then (\ref{2407172}) follows from $\left|\mf_{Z}\right|\leq {n-(2t-i)\b k-(2t-i)}$. If $\tau_{t}(\mf)>2t-i$, then (\ref{2407172}) holds from Corollary \ref{2407166}. 
	
	Note that $Z= Z\cap X_{1}+Z\cap X_{2}$ since $\dim(Z)=\dim(Z\cap X_{1}+Z\cap X_{2})$. By Lemma \ref{2404234}, we have 
	\begin{equation}\label{2407173}
		\left|\ma(Y)\right|\leq \left| \left\{Z_{1}\in {X_{1}\b t}: Z_{1}\cap X_{1}\cap X_{2}=Y\right\}\right|^{2}=q^{2(j-i)(t-i)}{\ell-j\b t-i}^{2}.
	\end{equation}
	
	From (\ref{24071713})--(\ref{2407173}), we get the desired result.
\end{proof}

\begin{proof}[\bf Proof of Proposition \ref{2407081}]
	By Lemma \ref{2405262}, it is sufficient to show 
	\begin{equation*}
		\left|\mf\right|<\left\{
		\begin{aligned}
			&\frac{71}{72}{k-t+1\b1}{n-t-1\b k-t-1} &\textnormal{if}\ \  q\geq 3,\\
			&\frac{23}{24}{k-t+1\b1}{n-t-1\b k-t-1} &\textnormal{if}\ \  q=2 .
		\end{aligned}
		  \right.
	\end{equation*}
	We investigate $\mf$  in three cases. 
	For convenience,  denote the set of all $t$-covers of $\mf$  with dimension $\tau_{t}(\mf)$ by $\mt$.

	\medskip
	\noindent{{\bf Case 1.} $\tau_t(\mf)=t+2<k$.}
	\medskip

	\noindent{{\bf Case 1.1.} $\left| \mt\right|=1$.}
	\medskip
	
	Set $\mt=\lbrace T \rbrace$. Then 
	$\mf=\mf_{T}\cup \left( \bigcup_{H\in {T\b t}}\left(\mf \bs \mf_{T}\right)_{H} \right)$.
	It follows from Lemma \ref{2407022} that
	$$\left|\mf\right|\leq {n-t-2\b k-t-2}+{t+2\b 2}\left({k-t+1\b 1}^{3}{n-t-3\b k-t-3} + s{k-t+1\b 1}^{2}+s{k-t+1\b 1}+s\right).$$
	By Lemmas \ref{1}  and \ref{2405261}, we obtain
	\begin{equation*}
		\begin{aligned}
			\frac{\left|\mf\right|}{{k-t+1\b 1}{n-t-1\b k-t-1}}\leq &\  \frac{1}{q^{n-t}}+\frac{7}{2q^{2n-4k-2}}+\frac{7s}{2q^{2n-3k-t-1}}+\frac{7s}{2q^{2n-2k-2t}}+\frac{7s}{2q^{2n-k-3t}}.
		\end{aligned}
	\end{equation*}	
 Recall that $n\geq 2k+t+s-1+\delta_{2,q}$ and $t+s\geq 3$. If $q\geq 3$, then 
 $$\frac{\left|\mf\right|}{{k-t+1\b 1}{n-t-1\b k-t-1}}\leq \frac{1}{q^{2k+s-1}}+\frac{7}{2q^{2t+2s-4}}+\frac{7s}{2q^{k+t+2s-3}}+\frac{7s}{2q^{2k+2s-2}}+\frac{7s}{2q^{3k+2s-t-2}}\leq \frac{71567}{177147}<\frac{71}{72}.$$
 If $q= 2$, then  
 $$\frac{\left|\mf\right|}{{k-t+1\b 1}{n-t-1\b k-t-1}}\leq \frac{1}{2^{2k+s}}+\frac{7}{2^{2t+2s-1}}+\frac{7s}{2^{k+t+2s}}+\frac{7s}{2^{2k+2s+1}}+\frac{7s}{2^{3k+2s-t+1}}\leq \frac{4127}{16384}<\frac{23}{24}.$$
 The desired result follows.
 
 \medskip
 \noindent{{\bf Case 1.2.} $\left| \mt\right|\geq 2$.}
 \medskip
	
	Let $T_{1}$ and $T_{2}$  be distinct members of $\mt$. Then $\dim(T_{1}\cap T_{2})\in \lbrace t,t+1 \rbrace$ by Lemma \ref{2405122}.

	\medskip
	\noindent{{\bf Case 1.2.1.}  $\dim(T_{1}\cap T_{2})=t$.}
	\medskip

	For each $F\in \mf$, we have 
	$$t+2=\dim(T_{1})\geq \dim(F\cap T_{1}+T_{1}\cap T_{2})\geq 2t-\dim(F\cap T_{1}\cap T_{2}),$$
	implying that $\dim(F\cap T_{1}\cap T_{2})\geq t-2$. Hence
	$$\mf=\mf(T_{1}, T_{2};t,t)\cup \mf(T_{1}, T_{2};t, t-1)\cup \mf(T_{1}, T_{2};t, t-2).$$ 
	By Lemma \ref{2405281}, we know
	\begin{equation*}
		\begin{aligned}
			\left| \mf\right|\leq&\ \left({k-t+1\b1}^{2}{n-t-2\b k-t-2}+s{k-t+1\b 1}+s\right)+\left(q^{2}{t\b 1}{2\b 1}^{2}{k-t+1\b1}{n-t-2\b k-t-2}\right.\\
			&\ \left.+q^{2}s{t\b 1}{2\b 1}^{2}\right)+q^{8}{t\b 2}{n-t-2\b k-t-2}.		
		\end{aligned}
	\end{equation*}
  If $q\geq 3$, then $q+1\leq \frac{4}{3}q$. By  Lemmas \ref{1} and \ref{2405261}, we have 
	\begin{equation*}
		\begin{aligned}
			\frac{\left| \mf\right|}{{k-t+1\b 1}{n-t-1\b k-t-1}}\leq& \frac{3}{2q^{2t+s-1}}+\frac{s}{q^{2k+2t+2s-2}}+\frac{s}{q^{3k+t+2s-2}}+\frac{8}{3q^{(k-t)+(t+s)-4}}+\frac{8s}{3q^{3k+2s-5}}\\
			&+\frac{2}{q^{2k-2t+s-5}}\leq \frac{5492909}{9565938}<\frac{71}{72}. 
		\end{aligned}
	\end{equation*}
	If $q=2$ and $k=t+3$, then by  Lemmas \ref{1} and \ref{2405261}, we get 
	\begin{equation*}
		\begin{aligned}
			\frac{\left| \mf\right|}{{k-t+1\b 1}{n-t-1\b k-t-1}}\leq& \frac{45}{2^{2t+s+5}-1}+\frac{s}{2^{4t+2s+6}}+\frac{s}{15\cdot2^{4t+2s+6}}+\frac{108}{2^{t+s+5}}+\frac{36s}{15\cdot2^{3t+2s+6}}\\
			&+\frac{56}{15\cdot2^{s+3}}\leq \frac{1167905}{1569792}<\frac{23}{24}. 
		\end{aligned}
	\end{equation*}
   Notice that $\frac{2^{t}-1}{2^{2t}}\leq \frac{1}{4}$.	If $q=2$ and $k\geq t+4$, then by  Lemmas \ref{1} and \ref{2405261}, we derive 
	\begin{equation*}
		\begin{aligned}
			\frac{\left| \mf\right|}{{k-t+1\b 1}{n-t-1\b k-t-1}}\leq& \frac{1}{2^{2t+s-1}}+\frac{s}{2^{3k+3t+3s}}+\frac{s}{2^{4k+2t+3s}}+\frac{36}{2^{k-t+s}}\cdot\frac{2^{t}-1}{2^{2t}}+\frac{36s}{2^{4k+3s}}\cdot\frac{2^{t}-1}{2^{2t}}\\
			&+\frac{7}{2^{2k+s-2t-3}}\leq \frac{69206177}{134217728}<\frac{23}{24}. 
		\end{aligned}
	\end{equation*}
	These imply the desired result.

	\medskip
	\noindent{{\bf Case 1.2.2.}  $\dim(T_{1}\cap T_{2})=t+1$.}
	\medskip

	For each $F\in \mf$, we have 
	$$t+2=\dim(T_{1})\geq \dim(F\cap T_{1}+T_{1}\cap T_{2})\geq 2t+1-\dim(F\cap T_{1}\cap T_{2}),$$
	implying $\dim(F\cap T_{1}\cap T_{2})\geq t-1$. Hence
	$$\mf = \mf(T_{1},T_{2};t,t-1)\cup \left(\bigcup_{H\in{T_{1}\cap T_{2}\b t}}\mf_{H}\right).$$
	It follows from Corollary \ref{2407166}  and Lemma \ref{2405281} that 
	\begin{equation*}
		\begin{aligned}
			\left|\mf\right|\leq&\ q^{4}{t+1\b 2}{k-t+1\b 1}{n-t-2\b k-t-2}+sq^{4}{t+1\b 2}+ {t+1\b 1}{k-t+1\b 1}^{2}{n-t-2\b k-t-2}\\
			&\ +s{t+1\b 1}{k-t+1\b1 }+s{t+1\b 1}.
		\end{aligned}
	\end{equation*}
	If $q\geq 3$, then by  Lemmas \ref{1} and \ref{2405261}, we have 
	\begin{equation*}
		\begin{aligned}
			\frac{\left| \mf\right|}{{k-t+1\b 1}{n-t-1\b k-t-1}}\leq&\  \frac{2}{q^{k+s-t-3}}+\frac{2s}{q^{3k+2s-t-4}}+\frac{9}{4q^{t+s-1}}+\frac{3s}{2q^{2k+t+2s-2}}+\frac{3s}{2q^{3k+2s-2}}\\
			\leq&\ \frac{1948745}{2125764}<\frac{71}{72}. 
		\end{aligned}
	\end{equation*}
	If $q=2$ and $k=t+3$, then by  Lemma \ref{1}, we know 
	\begin{equation*}
		\begin{aligned}
			\frac{\left| \mf\right|}{{k-t+1\b 1}{n-t-1\b k-t-1}}\leq&\  
			 \frac{16(2^{t+1}-1)(2^{t}-1)}{2^{n-t-1}-1}+\frac{16s(2^{t+1}-1)(2^{t}-1)}{45\cdot2^{2n-2k}}+\frac{45(2^{t+1}-1)}{2^{n-t-1}-1}+\frac{s(2^{t+1}-1)}{2^{2n-2k}}\\
		     &+\frac{s(2^{t+1}-1)}{15\cdot2^{2n-2k}}\\
		     \leq&\ \frac{16}{2^{s+4}}+\frac{16s}{45\cdot2^{2t+2s+5}}+\frac{45}{2^{t+s+4}}+\frac{s}{2^{3t+2s+5}}+\frac{s}{15\cdot 2^{3t+2s+5}}<\frac{491}{576}<\frac{23}{24}. 
		\end{aligned}
	\end{equation*}
	If $q=2$ and $k\geq t+4$, then by  Lemmas \ref{1} and \ref{2405261}, we get
	\begin{equation*}
		\begin{aligned}
			\frac{\left| \mf\right|}{{k-t+1\b 1}{n-t-1\b k-t-1}}\leq&\  
		 \frac{7}{2^{n-k-2t-1}}+\frac{7s}{2^{3n-2k-3t-1}}+\frac{1}{2^{n-2k-2}}+\frac{s}{2^{3n-3k-t-1}}+\frac{s}{2^{3n-2k-2t-1}}\\
		 \leq &\ \frac{7}{2^{k+s-t-1}}+\frac{7s}{2^{2k+2(t+s)+s+7}}+\frac{1}{2^{t+s-2}}+\frac{s}{2^{3k+2(t+s)+s-1}}+\frac{s}{2^{3k+2(t+s)+s+3}}\\
		 \leq &\ \frac{31457311}{33554432}<\frac{23}{24}. 
		\end{aligned}
	\end{equation*}
	The desired result follows.

	\medskip
	\noindent{{\bf Case 2.} $\tau_t(\mf)=t+2=k$.}
	\medskip

	Let $U_1,U_2\in\mf$ with $\dim(U_1\cap U_2)\leq t-1$. 
	Suppose  that $\dim(U_{1}\cap U_{2})\leq  t-3$. Then for any $t$-cover $W$ of $\mf$, we have
	$$\dim(W)\geq \dim(W\cap U_{1}+W\cap U_{2})\geq \dim(W\cap U_{1})+\dim(W\cap U_{2})-\dim(U_{1}\cap U_{2})\geq t+3,$$
	a contradiction to the assumption that $\tau_{t}(\mf)=t+2$. Hence $\dim(U_{1}\cap U_{2})\geq t-2$.  Define $$\mf^{\prime}=\left\{F\in\mf: \dim(F\cap U_{1})\geq t,\ \dim(F\cap U_{2})\geq t\right\}.$$

	\medskip
	\noindent{{\bf Case 2.1.} $\dim(U_{1}\cap U_{2})=t-2$.}
	\medskip
	
In this case, we have $t\geq 2$.	
Let $U\in \mf^{\prime}\cup \mt$. Then $\dim(U\cap U_{1})\geq t$ and $\dim(U\cap U_{2})\geq t$.  
By Lemma \ref{2405283}, we have
\begin{equation}\label{2406062}
	\dim(U\cap U_{1})=\dim(U\cap U_{2})=t,\ U_{1}\cap U_{2}\subseteq U,\ U=U\cap U_{1}+U\cap U_{2}.
\end{equation}
Choose $T\in \mt$. We get 
$$\mf^{\prime}=\mf^{\prime}_{T\cap U_{1}}\cup \mf^{\prime}_{T\cap U_{2}}\cup \left(\mf^{\prime}\bs \left(\mf^{\prime}_{T\cap U_{1}}\cup \mf^{\prime}_{T\cap U_{2}}\right)\right).$$

For each $i\in\lbrace1,2 \rbrace$ and $F\in \mf^{\prime}_{T\cap U_{i}}$, it follows from (\ref{2406062}) that
$$\dim(F\cap (T\cap U_{i}+U_{3-i}))\geq \dim(F\cap T \cap U_{i}+F\cap U_{3-i})\geq t+2,$$
which implies $F\subseteq T\cap U_{i}+U_{3-i}$. Notice that $\dim(T\cap U_{i})=t$ and $\dim(T\cap U_{i}+U_{3-i})=t+4$ due to (\ref{2406062}). Then 
\begin{equation}\label{2406063}
	\left| \mf^{\prime}_{T\cap U_{i}} \right|\leq \left| \left\{ F\in {T\cap U_{i}+U_{3-i}\b t+2}: T\cap U_{i}\subseteq F\right\}\right|= {4\b 2}.
\end{equation}

		Let $F\in\mf^{\prime}\bs \left(\mf^{\prime}_{T\cap U_{1}}\cup \mf^{\prime}_{T\cap U_{2}}\right)$. We have 
	$$\dim(F\cap T\cap U_{1})\leq t-1,\quad\dim(F\cap T\cap U_{2})\leq t-1.$$
	By  Lemma \ref{2405232}, we get $F\cap T=F\cap T\cap U_{1}+F\cap T\cap U_{2}$. We also know $U_{1}\cap U_{2}\subseteq F\cap T$ by (\ref{2406062}). 
	These yield
	\begin{equation*}
		\begin{aligned}
			t\leq \dim(F\cap T) 		=\dim(F\cap T\cap U_{1})+\dim(F\cap T\cap U_{2})-\dim(U_{1}\cap U_{2})\leq t, 		
		\end{aligned}
	\end{equation*}
	implying that $\dim(F\cap T\cap U_{1})=\dim(F\cap T\cap U_{2})=t-1$.
	Notice that $F=F\cap U_{1}+F\cap U_{2}$ by (\ref{2406062}). Then 
	$$\mf^{\prime}\bs \left(\mf^{\prime}_{T\cap U_{1}}\cup \mf^{\prime}_{T\cap U_{2}}\right)\subseteq \left\{ H_{1}+H_{2}: H_{1}\in\mh_{1},\ H_{2}\in \mh_{2}\right\},$$
	where for each $i\in\lbrace1,2 \rbrace$, $$\mh_{i}=\left\{ H_{i}\in {U_{i}\b t}: U_{1}\cap U_{2}\subseteq H_{i},\ \dim(H_{i}\cap (T\cap U_{i}))=t-1\right\}.$$
	It follows from  Lemma \ref{2404234} that $\left| \mh_{1}\right|=\left|\mh_{2}\right|=q{2\b1}^{2}$. Hence
	\begin{equation}\label{2406065}
		\left|\mf^{\prime}\bs \left(\mf^{\prime}_{T\cap U_{1}}\cup \mf^{\prime}_{T\cap U_{2}}\right)\right|\leq \left|\mh_{1}\right|^{2}= q^{2}{2\b 1}^{4}.
	\end{equation}
	
	Note that  $n\geq 2k+t+s-1$ and $q+1\leq \frac{3}{2}q$ for  $q\geq 2$.	By  (\ref{2406063}), (\ref{2406065}) and  Lemma \ref{1}, we have 
	\begin{equation*}
		\begin{aligned}
			\frac{\left|\mf\right|}{{3\b 1}{n-t-1\b 1}}\leq &\ \frac{2(q^{4}-1)}{(q+1)(q^{n-t-1}-1)}+\frac{81(q+1)^{4}(q-1)}{16(q^{2}+q+1)(q^{n-t-1}-1)}+\frac{2s(q-1)}{(q^{2}+q+1)(q^{n-t-1}-1)}\\
			\leq &\ \frac{2}{(q+1)q^{2t+s-2}}+\frac{81}{16(q^{2}+q+1)q^{2t+s-5}}+\frac{2s}{(q^{2}+q+1)q^{2t+s+1}}\\
			\leq&\ \frac{545}{672}<\frac{23}{24},
		\end{aligned}
	\end{equation*}
	as desired.

	\medskip
	\noindent{{\bf Case 2.2.} $\dim(U_{1}\cap U_{2})=t-1$.}
	\medskip
	
	For any $F\in \mf^{\prime}$, we have 
	$$t+2=\dim(F)\geq \dim(F\cap U_{1}+F\cap U_{2})\geq 2t-\dim(F\cap U_{1}\cap U_{2}),$$
	implying that $\dim(F\cap U_{1}\cap U_{2})\geq t-2$. Then 
	\begin{equation}\label{2406071}
		\mf=\mf(U_{1}, U_{2};t, t-1)\cup \mf(U_{1}, U_{2};t, t-2)\cup \md_{\mf}(U_{1};t)\cup \md_{\mf}(U_{2};t).
	\end{equation}
	From Lemma \ref{2405281}, we know
	\begin{equation}\label{2406072}
		\left| \mf(U_{1}, U_{2};t, t-1) \right|\leq {3\b 1}^{3}+s{3\b 1}^{2}.
	\end{equation}

	\medskip
	\noindent{{\bf Case 2.2.1.} $t\neq  2$.}
	\medskip
	
	Suppose $t=1$. Then $ \mf(U_{1}, U_{2};t, t-2)=\emptyset$. By (\ref{2406071}) and (\ref{2406072}), we have 
	\begin{equation*}
		\begin{aligned}
			\frac{\left|\mf\right|}{{3\b 1}{n-t-1\b 1}}\leq \frac{{3\b 1}^{2}}{{n-2\b 1}}+\frac{s{3\b 1}}{{n-2\b 1}}+\frac{2s}{{3\b 1}{n-t-2\b1}}.
		\end{aligned}
	\end{equation*}
	If $q\geq 3$, then by Lemma \ref{1}, we have 
	\begin{equation*}
		\begin{aligned}
			\frac{\left|\mf\right|}{{3\b 1}{n-t-1\b 1}}\leq \frac{9}{4q^{n-7}}+\frac{3s}{2q^{n-5}}+\frac{2s}{(q^{2}+q+1)q^{n-3}}\leq \frac{3869}{4212}<\frac{71}{72}. 
		\end{aligned}
	\end{equation*}
	If $q= 2$, then by Lemma \ref{1}, we get
	\begin{equation*}
		\begin{aligned}
			\frac{\left|\mf\right|}{{3\b 1}{n-t-1\b 1}}\leq \frac{49}{2^{n-2}-1}+\frac{7s}{2^{n-3}}+\frac{2s}{7\cdot2^{n-3}}\leq \frac{17453}{28448}<\frac{23}{24}. 
		\end{aligned}
	\end{equation*}
	
	Suppose $t\geq 3$. 	It follows from Lemma \ref{2405281} that
	\begin{equation}\label{2407188}
		\left|\mf(U_{1}, U_{2};t, t-2) \right|\leq q^{4}{t-1\b 1}{3\b 1}^{2}.
	\end{equation}
	Note that $n\geq 2k+t+s-1+\delta_{2,q}$ and ${3\b 1}\leq 2q^{2}$ for $q\geq 2$. These together with (\ref{2406071})--(\ref{2407188}) and  Lemma \ref{1} yield
	\begin{equation*}
		\begin{aligned}
			\frac{\left| \mf\right|}{{3\b 1}{n-t-1\b 1}}\leq&\ \frac{{3\b 1}^{2}}{{n-t-1\b 1}}+\frac{s{3\b 1}}{{n-t-1\b 1}}+\frac{q^{4}{t-1\b 1}{3\b 1}}{{n-t-1\b 1}}+\frac{2s}{{3\b1}{n-t-1\b1}}\\
			\leq&\ \frac{1}{q^{n-t-8}} +\frac{s}{q^{n-t-5}}+\frac{2q^{6}(q^{t-1}-1)}{q^{n-t-1}-1}+\frac{2s}{q^{n-t}}\\
			\leq&\ \max \left\{\frac{46172}{59049}, \frac{657}{1024}\right\}<\frac{23}{24}.
		\end{aligned}
	\end{equation*}
	The desired result holds.
	
	\medskip
	\noindent{{\bf Case 2.2.2.} $t=2$.}
	\medskip
	
	We claim that  
	\begin{equation}\label{2407178}
		\left| \mf(U_{1}, U_{2};2, 0)\right|\leq 2q^{2}{3\b 1}+2q^{3}{2\b 1}^{2}{3\b 1}+s.
	\end{equation}
	W.l.o.g., assume that $\mf(U_{1}, U_{2};2, 0)\neq \emptyset$. 	
	Let $U_{3}\in \mf(U_{1}, U_{2};2,0)$ and
	$$\mi=\left\{I\in\mf(U_{1}, U_{2};2, 0): \dim(I\cap U_{3})\geq 2 \right\}.$$ 
	To prove (\ref{2407178}), we first show 
	\begin{equation}\label{2407179}
		\begin{aligned}
			\mi \subseteq \mw_{1}\cup \mw_{2} \cup \mw_{1}^{\prime}\cup \mw_{2}^{\prime},
		\end{aligned}
	\end{equation}
	where for $i\in \lbrace 1,2 \rbrace$,
	$$\mw_{i}=\left\{ U_{3}\cap U_{i}+X: X\in{U_{3-i}\b 2},\ \dim(X\cap U_{1}\cap U_{2})=0\right\},$$
	$$\mw_{i}^{\prime}=\left\{ X+Y: X\in {U_{i}\b 2},\ Y\in {U_{3-i}\b 2},\ \dim(X\cap U_{3}\cap U_{i})=1,\ \dim(Y\cap U_{1}\cap U_{2})=0  \right\}.$$
	
	Let $U\in \mf(U_{1}, U_{2};2, 0)$. We have 
	$$4=\dim(U)\geq \dim(U\cap U_{1}+U\cap U_{2})=\dim(U\cap U_{1})+\dim(U\cap U_{2})\geq 4,$$
	which implies 
	\begin{equation}\label{2406075}
		\dim(U\cap U_{1})=\dim(U\cap U_{2})=2,\ U=U\cap U_{1}+U\cap U_{2}\subseteq U_{1}+U_{2}. 
	\end{equation}
	Let $I\in \mi$. 
	Suppose  that $\dim(I\cap U_{3}\cap U_{1})=\dim(I\cap U_{3}\cap U_{2})=0$. From (\ref{2406075}), we have 
	$$\dim(I\cap U_{1}+U_{3}\cap U_{1})=\dim(I\cap U_{2}+U_{3}\cap U_{2})=4,$$
	implying $I\cap U_{1}+U_{3}\cap U_{1}=U_{1}$ and $I\cap U_{2}+U_{3}\cap U_{2}=U_{2}$. These together with (\ref{2406075}) yield
	$$U_{1}+U_{2}=I\cap U_{1}+U_{3}\cap U_{1}+I\cap U_{2}+U_{3}\cap U_{2}\subseteq I+U_{3}\subseteq U_{1}+U_{2}.$$
	Hence $\dim(I\cap U_{3})=t-1$, a contradiction. So
	\begin{equation}\label{2406076}
		\dim(I\cap U_{3}\cap U_{1})\geq 1\ \ \textnormal{or} \ \dim(I\cap U_{3}\cap U_{2})\geq 1.
	\end{equation} 
	By (\ref{2406075}), we know $I=I_{1}+I_{2}$, where $I_{i}\in{U_{i}\b 2}$ with $\dim(I_{i}\cap U_{1}\cap U_{2})=0$. It follows from (\ref{2406076}) that $\dim(I_{1}\cap U_{3}\cap U_{1})\geq 1$ or $\dim(I_{2}\cap U_{3}\cap U_{1})\geq 1$. Then (\ref{2407179}) holds. 
	
	Note that $\left|\mf(U_{1}, U_{2};2, 0)\right|\leq \left|\mi\right|+s$ since $\mf$ is $s$-almost $t$-intersecting.  This together with (\ref{2407179}) and Lemma \ref{2404234} yields (\ref{2407178}).
	
	From (\ref{2406071}), (\ref{2406072}) and (\ref{2407178}), we have
	\begin{equation*}
		\begin{aligned}
			\left|\mf\right|\leq {3\b 1}^{3}+s{3\b 1}^{2}+2q^{2}{3\b1}+2q^{3}{2\b1}^{2}{3\b1}+3s.
		\end{aligned}
	\end{equation*}
Recall $n\geq 2k+t+s-1+\delta_{2,q}$. Then by Lemma \ref{1}, we get
\begin{equation*}
	\begin{aligned}
		\frac{\left|\mf\right|}{{3\b 1}{n-3\b1}}\leq&\ \frac{{3\b 1}^{2}}{{n-3\b1 }}+\frac{s{3\b 1}}{{n-3\b 1}}+\frac{2q^{2} }{{n-3\b 1}}+\frac{ 2q^{3}{2\b 1}^{2}}{{n-3\b 1}}+\frac{3s}{{3\b 1}{n-3\b 1}}\\
		\leq&\ s\left( \frac{{3\b 1}^{2}}{{n-3\b1 }}+\frac{{3\b 1}}{{n-3\b 1}}+\frac{2q^{2} }{{n-3\b 1}}+\frac{ 2q^{3}{2\b 1}^{2}}{{n-3\b 1}}\right)+\frac{3s}{(q^{2}+q+1)q^{n-4}}\\
		=&\ \frac{s}{q^{s-1}}\left(   \frac{2q^{6}+3q^{5}-q^{4}+2q^{3}-3q^{2}-q-2}{q^{n-s-2}-1}\right)+\frac{3s}{(q^{2}+q+1)q^{n-4}}\\
		\leq &\     \frac{2q^{6}+3q^{5}-q^{4}+2q^{3}-3q^{2}-q-2}{q^{7+\delta_{2,q}}-1}+\frac{3}{(q^{2}+q+1)q^{6+\delta_{2,q}}}.
	\end{aligned}
\end{equation*}
If $q\geq 3$, then   $\frac{2q^{6}+3q^{5}-q^{4}+2q^{3}-3q^{2}-q-2}{q^{7}-1}$ is decreasing. We further conclude 	$\frac{\left|\mf\right|}{{3\b 1}{n-3\b1}}\leq\frac{3362269}{3452787}<\frac{71}{72}$.  If $q=2$, then we have 
$\frac{\left|\mf\right|}{{3\b 1}{n-3\b1}}\leq\frac{187133}{228480}<\frac{23}{24}$. The desired result follows.

	\medskip
	\noindent{{\bf Case 3.} $\tau_t(\mf)\geq t+3$.}
	\medskip

	\medskip
	\noindent{{\bf Case 3.1.} $\tau_t(\mf)\leq k$.}
	\medskip

	Let $T\in\mt$. Then 
	$\mf=\bigcup_{H\in {T\b t}}\mf_{H}$.
	It follows from  Lemma \ref{5} and Corollary \ref{2407166} that 
	$$\frac{\left|\mf\right|}{{k-t+1\b 1}{n-t-1\b k-t-1}}\leq \frac{{t+3\b3}{k-t+1\b1}^{2} (q^{k-t-1}- 1)(q^{k-t-2}-1)}{(q^{n-t-1}-1)(q^{n-t-2}-1)}+ s\sum_{i=0}^{2}\frac{{t+3\b 3}{k-t+1\b 1}^{i}}{{k-t+1\b 1}{n-t-1\b k-t-1}}.$$
	By   Lemmas \ref{1} and \ref{2405261}, we have 
	\begin{equation*}\label{2406021}
		\begin{aligned}
			\frac{\left|\mf\right|}{{k-t+1\b 1}{n-t-1\b k-t-1}}&\leq 
			\frac{9}{2q^{2n-4k-t}}+\frac{3s}{q^{2n-3k-2t}}+\frac{2s}{q^{2n-2k-3t}}+\frac{2s}{q^{2n-k-4t}}\\
			&\leq \frac{9}{2q^{2}}+\frac{3s}{q^{4+s-1}}+\frac{2s}{q^{7+s-1}}+\frac{2s}{q^{10+s-1}}\leq \frac{63535}{118098}<\frac{71}{72}
		\end{aligned}
	\end{equation*}
	for $q\geq 3$, and 
	\begin{equation*}\label{2406022}
		\begin{aligned}
			\frac{\left|\mf\right|}{{k-t+1\b 1}{n-t-1\b k-t-1}}\leq&\  \frac{7(q^{k-t+1}-1)^{2}}{2q^{2n-2k-3t}}+\frac{7 sq^{k-t+1}}{2q^{2n-2k-3t}}+\frac{7s}{2q^{2n-2k-3t}}+\frac{7s}{2q^{2n-k-4t}}\\
			\leq &\ \frac{7(q^{k-t+1}-1)^{2}}{2q^{2k+s+1-t}}+\frac{7q^{k-t+1}}{2q^{2k+s+1-t}}+\frac{7}{2q^{2k+s+1-t}}+\frac{7}{q^{3k+s+1-2t}}\\
			=&\  \frac{7(q^{2k-2t+2}-q^{k-t+1}+2)}{2q^{2k+s+1-t}}+\frac{7}{2q^{3k+s+1-2t}}\\
			\leq&\  \frac{7}{2q^{s+t-1}}+\frac{7}{2q^{3k+s+1-2t}}\leq \frac{7175}{8192}<\frac{23}{24}
		\end{aligned}
	\end{equation*}
	for $q=2$. Then the desired result follows.
	
	\medskip
	\noindent{{\bf Case 3.2.} $\tau_t(\mf)\geq k+1$.}
	\medskip
	
		Note that ${k-t+1\b 1}\geq {3\b 1}\geq 7$ and $q^{z}\geq z+1$ for non-negative integer $z$. 
	From Lemma \ref{1}, we get
	\begin{equation*}
		\begin{aligned}
			\frac{s\binom{2k-2t+2}{k-t+1}}{{k-t+1\b 1}{n-t-1\b k-t-1}}&=\frac{2(2+\frac{1}{k-t})s\binom{2k-2t}{k-t-1}}{{k-t+1\b 1}{n-t-1\b k-t-1}}\leq \frac{5s\binom{2k-2t}{k-t-1}}{7q^{(n-k)(k-t-1)}}\leq \frac{5}{7}\cdot\frac{s(k-t+2)^{k-t-1}}{q^{\left(k+t+s-1\right)\left(k-t-1\right)}}\\
			&\leq \frac{5}{7}\cdot\frac{s}{q^{s-1}}\cdot\frac{(k-t+2)^{k-t-1}}{q^{(k+t)(k-t-1)}}\leq \frac{5}{7}\cdot\frac{s}{q^{s-1}}\cdot\frac{(k-t+2)^{k-t-1}}{(k+t+1)^{k-t-1}}\\
			&\leq \frac{5}{7}<\frac{23}{24}.
		\end{aligned}
	\end{equation*}
	This together with Lemma \ref{2407037} implies the desired result.
\end{proof}

	\medskip
	\noindent{\bf Acknowledgment.}	
	L. Ji is supported by the National Natural Science Foundation of China (No. 12271390). K. Wang is supported by the National Key R\&D Program of China (No. 2020YFA0712900) and National Natural Science Foundation of China (12071039, 12131011). T. Yao is supported by Guidance Plan for Key Scientific Research Projects of Higher Education Institutions in Henan Province (25B110003).

\end{document}